\documentclass[11pt]{article}

\usepackage{subfigure}
\usepackage{color}
\usepackage[cmex10]{amsmath}
\usepackage{mathrsfs,amssymb,amsmath}
\usepackage{cases}
\usepackage{graphicx}
\usepackage{caption}

\usepackage{epstopdf}
\usepackage{hyperref}
\hypersetup{pdfborder={0 0 0}}

\usepackage{float}
\usepackage{placeins}

\newcommand{\R}{{\mathbb R}}

\usepackage{algorithm} 
\usepackage{algorithmic} 

\newtheorem{remark}{Remark}[section]
\newtheorem{lemma}{Lemma}[section]

\newtheorem{proposition}{Proposition}

\newtheorem{theorem}{Theorem}[section]
\newtheorem{definition}{Definition}[section]
\newtheorem{notation}{Notation}[section]

\newenvironment{proof}{{\bf {Proof.}}}{\hfill $\square$}
\allowdisplaybreaks[4]

\numberwithin{equation}{section}

\def\pt{\partial}

\def\ra{\rightarrow}

\def\s{\subseteq}

\def\e{\varepsilon}

\def\ol{\overline}

\def\vp{\varphi}
\def\lg{\langle}

\def\rg{\rangle}

\def\bf{\textbf}
\def\pt{\partial}

\def\Om{\Omega}
\def\la{\lambda}
\def\al{\alpha}
\def\be{\beta}
\def\de{\delta}
\def\ga{\gamma}

\def\Ga{\Gamma}

\def\ts{\times}

\def\iy{\infty}

\def\f{\frac}

\def\Lra{\Leftrightarrow}

\def\Span{{\rm{span}}}

\def\df{\mathrm d}

\def\wt{\widetilde}
\def\wh{\widehat}

\def\esssup{\operatorname*{ess\ \! sup}}

\def\hra{\hookrightarrow}

\def\mcA{\mathcal{A}}

	\DeclareMathOperator{\Div}{div}

	\DeclareMathOperator{\dist}{dist}

	\DeclareMathOperator{\supp}{{supp}}

	\newcommand{\N}{\mathbb N}

\begin{document}

\title{Application of the Shape Design Method to Hidden Regularity of Degenerate Hyperbolic Equations with Degenerate Boundary}
\author{
	Dong-Hui Yang$^{1}$, Hongli Sun$^{2^*}$\protect\\
	{\small\itshape\parbox{0.9\textwidth}{\centering $^{1}$School of Mathematics and Statistics, Central South University, Changsha 410083, China}}\protect\\
	{\small\itshape\parbox{0.95\textwidth}{\centering $^{2}$School of Mathematical and Physical Sciences, Chongqing University of Science and Technology, Chongqing 401331, China}}
}

\footnotetext[2]{*Corresponding author: honglisun@126.com}

\date{}
\maketitle{}

\begin{abstract}
 This paper investigates the well-posedness and hidden regularity of boundary-degenerate hyperbolic equations in a two-dimensional setting. The shape design method is employed, which approximates the original degenerate problem by a family of uniformly elliptic problems on truncated subdomains and passes to the limit through uniform estimates. Within this framework, the existence and uniqueness of weak solutions are established in suitable weighted Sobolev spaces. A hidden regularity estimate for the conormal derivative on the nondegenerate portion of the boundary is obtained, providing a uniform bound in terms of the natural weighted energy norms of the initial data and source term. This estimate yields the boundary trace information essential for observability and controllability of degenerate hyperbolic systems. 
 
 \noindent\textbf{Keywords:} degenerate hyperbolic equations; hidden regularity; shape design method; weighted Sobolev spaces; boundary trace estimates

\end{abstract}

\section{Introduction}

Hyperbolic partial differential equations constitute the core mathematical models for describing wave propagation phenomena in elastodynamics, electromagnetics, and acoustics. However, in numerous physically relevant problems, such as those arising in heterogeneous media, layered composite materials, or domains with geometric singularities, the material parameters or wave propagation speeds may degenerate to zero on a portion of the boundary, thereby giving rise to boundary-degenerate hyperbolic problems that cannot be treated within the framework of classical theory. The loss of uniform ellipticity near the degenerate boundary renders standard energy estimates invalid, renders trace theorems inapplicable, and precludes the use of multiplier methods that are essential in boundary control problems.

The analysis of degenerate partial differential equations has attracted considerable attention over the past decades; fundamental results concerning local regularity, Harnack inequalities, and approximation properties were established in \cite{Cavalheiro,CS2,CS3,Fabes,FF}; the corresponding nonlinear potential theory was developed in \cite{Heinonen}; comprehensive weighted-space theories are presented in \cite{GC}; and related inverse problems have also been studied \cite{new1}. However, the literature on degenerate hyperbolic equations remains rather limited; the interplay between hyperbolic dynamics and spatial degeneracy gives rise to new phenomena absent in the elliptic and parabolic cases, and therefore demands fundamentally different analytical tools.

A particularly subtle aspect of hyperbolic problems is the phenomenon of hidden regularity, whereby solutions with only limited interior regularity nonetheless admit a normal derivative trace on the boundary satisfying an \(L^2\)-estimate. This property was studied on \cite{MillaMedeiros,Lions1987} in the context of  hyperbolic equations. Its importance in control theory lies in its role as the foundation for boundary observability estimates, from which exact controllability follows \cite{Coron,Lasiecka,Zuazua,Lions}. In the degenerate setting, however, the hidden regularity question becomes significantly more delicate. Boundary degeneracy complicates the interior regularity analysis and, more fundamentally, raises the question of whether a meaningful trace of the conormal derivative can be defined on the nondegenerate portion of the boundary and whether it admits a uniform estimate in terms of the natural weighted energy norms of the data. These issues are essential prerequisites for boundary control of degenerate hyperbolic systems.

The specific problem addressed in this paper is the following degenerate hyperbolic equation:
\begin{equation}\label{01.10.1}
	\begin{cases}
		\partial_{tt}y - \operatorname{div}(A\nabla y) = f, & \text{in } Q,\\
		y = 0, & \text{on } \partial Q,\\
		y(0) = y^0,\quad \partial_t y(0) = y^1, & \text{in } \Omega,
	\end{cases}
\end{equation}
where $\Omega = (-1,1) \times (0,1)$ and $Q = \Omega \times (0,T)$ for some $T>0$, and the coefficient matrix is given by $A = \operatorname{diag}(1, x_2^\alpha)$ with $\alpha \in (0,1)$. Here the weight function is $w = x_2^\alpha$, and the principal operator takes the form $\mathcal{A}y = -\operatorname{div}(A\nabla y)$. To formulate the boundary trace problem precisely, we decompose the boundary as follows. Let $\Gamma_2^0 = \{x = (x_1,x_2) \in \overline{\Omega} : x_2 = 0\}$ denote the degenerate portion of the boundary, and set $\Gamma = \partial\Omega \setminus \Gamma_2^0$ for the nondegenerate portion. For each $\delta \in (0, \frac{1}{4})$, we introduce the truncated domain 
$\Omega_\delta = \{x \in \Omega : x_2 > \delta\}$ with its associated boundaries 
\begin{equation*}
	\Gamma_2^\delta = \{x \in \overline{\Omega}_\delta : x_2 = \delta\}
	\quad\text{and}\quad
	\Gamma_\delta = \partial\Omega_\delta \setminus \Gamma_2^\delta.
\end{equation*}
Finally, the conormal derivative associated with the degenerate operator is defined by
\begin{equation*}
	\frac{\partial u}{\partial \nu_A} = A\nabla u \cdot \nu \quad \text{on } \partial\Omega,
\end{equation*}
where $\nu$ denotes the outward unit normal.

Various analytical techniques have been developed for degenerate PDEs. Carleman estimates \cite{Cannarsa1,Araruna,FG,Fursikov,Lebeau} and spectral inequalities \cite{Buffe1,Miller} have proven effective for controllability problems \cite{Beauchard,Beauchard1,Lu,Weiss}. However, they typically rely on delicate pointwise estimates in weighted function spaces and are not directly suited for investigating hidden regularity. For boundary trace analysis of hyperbolic equations, multiplier-based identities provide a natural framework \cite{Lasiecka}. Instead of following this classical multiplier approach, the present paper adopts an alternative perspective via the shape design method. This technique originates from the classical theory of PDE-constrained shape optimization \cite{Chenais,Buttazzo,Henrot,Tiba,Wang,He,Guo1,Guo2}. Rather than seeking an optimal domain, the method approximates the original degenerate problem by a family of uniformly elliptic problems defined on truncated subdomains; degeneracy is removed at the level of these approximate problems, for which classical estimates become available. This strategy has been successfully applied to establish well-posedness and controllability for degenerate PDEs \cite{Guo,Yangzhong,YangGuoz,Yang,Yang1,Wu}. For hidden regularity, where boundary-trace information must be extracted from solutions with limited interior regularity, this approximation framework is particularly advantageous, since it circumvents the requirement for high-order interior regularity estimates in weighted spaces.

In this paper, we employ the shape design method to address the well-posedness and hidden regularity of the degenerate hyperbolic equation \eqref{01.10.1}. The approximation proceeds by replacing the original problem on \(\Omega\) with a family of uniformly elliptic problems on the truncated domains \(\Omega_\delta\). On each \(\Omega_\delta\), classical well-posedness and trace theory apply. The central difficulty lies in establishing uniform estimates in \(\delta\), thereby enabling passage to the limit \(\delta \to 0^+\) and recovery of the desired boundary estimates for the original solution. This approach circumvents the need for higher-order interior regularity in weighted spaces, rendering it particularly effective for hidden regularity analysis. The main contribution of this paper is to establish a hidden regularity estimate for the boundary-degenerate hyperbolic equation under consideration, providing a uniform bound on the conormal derivative on the nondegenerate boundary \(\Gamma\) in terms of the natural weighted energy norms of the data. This estimate delivers the precise boundary trace information required for multiplier methods in observability and controllability theories for degenerate hyperbolic systems. Beyond this specific result, the methodology developed herein furnishes a general framework applicable to a broader class of degenerate hyperbolic problems, including those with time-dependent coefficients, nonlinearities, or more general degeneracy structures in higher dimensions.

The remainder of the paper is organized as follows. In Section 2, we introduce the weighted Sobolev framework and establish the well-posedness of the original degenerate equation, including the energy estimates and the preliminary trace regularity on a subdomain away from the degeneracy. In Section 3, we develop the shape design approximation for the hyperbolic setting: we prove well-posedness and uniform estimates for the truncated problems, establish the convergence of the approximate solutions and their normal derivatives, and ultimately derive the main hidden regularity theorem for the original problem.

\section{Weighted framework and well-posedness}

\subsection{Solution spaces}\label{S2}

Denote 
\begin{equation*}
	H^1(\Om;w)=\left\{u\in L^2(\Om)\colon \int_\Om A\nabla u\cdot \nabla u\df x<+\iy\right\},
\end{equation*}
its inner product and norm are defined by 
\begin{equation*}
	(u,v)_{H^1(\Om;w)}=\int_\Om \left(uv+\nabla u\cdot A\nabla v\right)\df x, \quad \|u\|_{H^1(\Om;w)}=(u,u)_{H^1(\Om;w)}^\f{1}{2}. 
\end{equation*}
Set
\begin{equation*}
	H_0^1(\Om;w)=\mbox{the closure of $C_0^\iy(\Om)$ in } H^1(\Om;w). 
\end{equation*}

Denote
\begin{equation*}
	H^2(\Om;w)=\left\{u\in H^1(\Om;w)\colon \mcA u\in L^2(\Om)\right\}, 
\end{equation*}
its inner product and norm are defined by
\begin{equation*}
	(u,v)_{H^2(\Om;w)}=(u,v)_{H^1(\Om;w)}+\int_\Om (\mcA u)(\mcA v)\df x,\quad \|u\|_{H^2(\Om;w)}=(u,u)_{H^2(\Om;w)}^\f{1}{2}. 
\end{equation*}

It is well known that the spaces 
\begin{equation*}
	H_0^1(\Om;w),\quad H^1(\Om;w),\quad H^2(\Om;w)
\end{equation*}
are Hilbert spaces (See \cite{GC,Heinonen}). 

Denote 
\begin{equation*}
	D(w)=H^2(\Om;w)\cap H_0^1(\Om;w). 
\end{equation*}

\begin{lemma}\label{03.11.L1}
	Let $\al\in (0,1)$. Then for all $u\in H_0^1(\Om;w)$ we have
	\begin{equation*}
		\int_\Om x_2^{\al-2}u^2\df x\leq \f{4}{(1-\al)^2}\int_\Om x_2^\al   (\pt_{x_2}u)^2\df x. 
	\end{equation*}
\end{lemma}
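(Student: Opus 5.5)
This is a one-dimensional weighted Hardy inequality in the variable $x_2$, applied for each fixed $x_1$ and then integrated in $x_1$. By the definition of $H_0^1(\Om;w)$ as the closure of $C_0^\iy(\Om)$, it suffices to prove the inequality for $u\in C_0^\iy(\Om)$. The general case then follows by density: if $u_n\to u$ in $H^1(\Om;w)$, then $x_2^{\al/2}\pt_{x_2}u_n\to x_2^{\al/2}\pt_{x_2}u$ in $L^2(\Om)$, so the right-hand side converges. Along a subsequence $u_n\to u$ a.e., so Fatou's lemma passes the left-hand side to the limit.

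For smooth compactly supported $u$, fix $x_1$ and set $g(s)=u(x_1,s)$, which vanishes near $s=0$ and $s=1$. Since $\al<1$, we have $\al-1\neq 0$ and $x_2^{\al-2}=\frac{1}{\al-1}\pt_{x_2}\bigl(x_2^{\al-1}\bigr)$. Integrating by parts in $x_2$ gives no boundary terms, because $u$ vanishes near $x_2=0$ and $x_2=1$:
\begin{equation*}
\int_0^1 x_2^{\al-2}u^2\,\df x_2=-\frac{2}{\al-1}\int_0^1 x_2^{\al-1}u\,\pt_{x_2}u\,\df x_2=\frac{2}{1-\al}\int_0^1 \bigl(x_2^{\al/2-1}u\bigr)\bigl(x_2^{\al/2}\pt_{x_2}u\bigr)\df x_2 .
\end{equation*}
Integrating over $x_1\in(-1,1)$ and applying Cauchy--Schwarz gives
\begin{equation*}
I\le \frac{2}{1-\al}\, I^{1/2}\Bigl(\int_\Om x_2^\al(\pt_{x_2}u)^2\df x\Bigr)^{1/2},
\end{equation*}
where $I=\int_\Om x_2^{\al-2}u^2\df x$. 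Here $I$ is finite because $u$ is supported away from $x_2=0$. Dividing by $I^{1/2}$ (the case $I=0$ is trivial) and squaring yields the constant $4/(1-\al)^2$.

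The only delicate point is the density step. The weighted norm controls $x_2^{\al/2}\pt_{x_2}u$ only in $L^2$, and it is not a priori clear that $I$ is finite for a general $u\in H_0^1(\Om;w)$. Fatou's lemma handles this without any a priori finiteness, since the inequality for $u_n$ bounds $\int x_2^{\al-2}u_n^2$ uniformly. The hypothesis $\al<1$ is exactly what makes the constant $(1-\al)^{-2}$ finite.
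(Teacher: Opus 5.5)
Your proof is correct, but it takes a different route from the paper's.

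The paper works directly with $u\in H_0^1(\Om;w)$ and writes $u(x_1,x_2)=\int_0^{x_2}\pt_{x_2}u(x_1,s)\,\df s$, using $u(x_1,0)=0$. It then applies Cauchy--Schwarz with an auxiliary weight $s^{\be}$, $\be\in(\al,1)$, and swaps the order of integration. This gives the constant $\frac{1}{(\be-\al)(1-\be)}$, which is minimized at $\be=\frac{1+\al}{2}$ to yield $\frac{4}{(1-\al)^2}$. You instead integrate by parts against $\frac{1}{\al-1}\pt_{x_2}(x_2^{\al-1})$ and absorb via Cauchy--Schwarz. This produces the same sharp constant in one line, with no parameter to optimize.

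Each route has its own trade-off. The paper's argument involves only nonnegative integrands (Tonelli), so it needs neither a priori finiteness of $\int_\Om x_2^{\al-2}u^2\,\df x$ nor vanishing of $u$ at $x_2=1$. However, it tacitly assumes that a general element of $H_0^1(\Om;w)$ is absolutely continuous on almost every vertical line with value $0$ at $x_2=0$, and the paper never justifies this. Your absorption step (dividing by $I^{1/2}$) does require $I<\infty$, which is why you first restrict to $C_0^\iy(\Om)$. Your explicit density-plus-Fatou argument then extends the inequality rigorously to all of $H_0^1(\Om;w)$, and it closes exactly the point the paper leaves implicit.
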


\begin{proof}
	Let $u\in H_0^1(\Om;w)$. Then for each $\be\in (\al,1)$ we have 
	\begin{equation*}
		\begin{split}
			\int_s^1x_2^{\al-1-\be}\df x_2
			&=\f{1}{\al-\be}\left(1-s^{\al-\be}\right)\leq \f{1}{\be-\al}s^{\al-\be}, 
		\end{split}
	\end{equation*}
	and then 
	\begin{equation*}
		\begin{split}
			\int_\Om x_2^{\al-2}u^2\df x
			&=\int_{-1}^1\int_0^1 x_2^{\al-2}u^2\df x_2\df x_1=\int_{-1}^1\int_0^1x_2^{\al-2}|u(x_1,x_2)-u(x_1,0)|^2\df x_2\df x_1\\
			&=\int_{-1}^1\int_0^1x_2^{\al-2}\left(\int_0^{x_2}\pt_{x_2}u(x_1,s)\df s\right)^2\df x_2\df x_1\\
			&\leq \int_{-1}^1\int_0^1x_2^{\al-2}\left(\int_0^{x_2}s^\be\left|\pt_{x_2}u(x_1,s)\right|^2\df s\right)\left(\int_0^{x_2}s^{-\be}\df s\right)\df x_2\df x_1\\
			&=\f{1}{1-\be}\int_{-1}^1\int_0^1\int_0^{x_2} x_2^{\al-1-\be} s^\be \left|\pt_{x_2}u(x_1,s)\right|^2\df s\df x_2\df x_1\\
			&=\f{1}{1-\be}\int_{-1}^1\int_0^1\int_{s}^1x_2^{\al-1-\be}s^\be \left|\pt_{x_2}u(x_1,s)\right|^2\df x_2\df s\df x_1\\
			&\leq \f{1}{(\be-\al)(1-\be)}\int_{-1}^1\int_0^1 s^\al \left|\pt_{x_2}u(x_1,s)\right|^2\df s\df x_1\\
			&=\f{1}{(\be-\al)(1-\be)}\int_\Om x_2^\al \left|\pt_{x_2}u\right|^2\df x. 
		\end{split}
	\end{equation*}
	Taking $\be=\f{1+\al}{2}$,  the lemma is obtained. 
\end{proof}

\begin{remark}\label{03.11.R1}
	Let $\al\in (0,1)$. Then for all $u\in H_0^1(\Om;w)$ we have 
	\begin{equation}\label{03.11.1}
		\int_\Om u^2\df x=\int_\Om x_2^{2-\al}x_2^{\al-2}u^2\df x\leq \f{4}{(1-\al)^2}\int_\Om x_2^\al  (\pt_{x_2}u)^2\df x. 
	\end{equation}
	Hence the inner product and the norm are defined by 
	\begin{equation*}
		(u,v)_{H_0^1(\Om;w)}=\int_\Om A\nabla u\cdot \nabla u\df x,\quad \|u\|_{H_0^1(\Om;w)}=(u,u)_{H^1(\Om;w)}^\f{1}{2}
	\end{equation*}
	are equivalent inner product and norm on $H_0^1(\Om;w)$, respectively. 
\end{remark}

\begin{lemma}\label{03.11.L2}
	The embedding $H_0^1(\Om;w)\hra L^2(\Om)$ is compact. 
\end{lemma}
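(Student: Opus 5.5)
Let $\{u_n\}$ be a bounded sequence in $H_0^1(\Om;w)$, say $\|u_n\|_{H_0^1(\Om;w)}\le M$. The plan is to show that $\{u_n\}$ has a subsequence that is Cauchy in $L^2(\Om)$. We split $\Om$ into the strip $\Om\setminus\Om_\de$ near the degenerate boundary $\Ga_2^0$ and the truncated domain $\Om_\de$, where the weight is bounded below.

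\textbf{Smallness near $\Ga_2^0$.} By Lemma \ref{03.11.L1}, uniformly in $n$,
\begin{equation*}
\int_{\Om\setminus\Om_\de} u_n^2\df x=\int_{\{x_2<\de\}} x_2^{2-\al}x_2^{\al-2}u_n^2\df x\le \de^{2-\al}\f{4}{(1-\al)^2}\int_\Om x_2^\al(\pt_{x_2}u_n)^2\df x\le \f{4\de^{2-\al}M^2}{(1-\al)^2}.
\end{equation*}
This is the step where the Hardy-type inequality of Lemma \ref{03.11.L1} is essential: it makes the $L^2$ mass near the degenerate boundary uniformly small.

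\textbf{Compactness away from $\Ga_2^0$.} On $\Om_\de$ we have $x_2^\al\ge\de^\al$. Hence
\begin{equation*}
\|u_n\|_{H^1(\Om_\de)}^2\le \int_{\Om_\de}\bigl(u_n^2+|\pt_{x_1}u_n|^2+\de^{-\al}x_2^\al|\pt_{x_2}u_n|^2\bigr)\df x\le C(\de)M^2,
\end{equation*}
where Remark \ref{03.11.R1} controls the $L^2$ part. Since $\Om_\de$ is a Lipschitz rectangle, the Rellich--Kondrachov theorem shows that $H^1(\Om_\de)\hra L^2(\Om_\de)$ is compact. One technical point must be checked: elements of $H_0^1(\Om;w)$ restrict to $H^1(\Om_\de)$ functions. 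This follows by density of $C_0^\iy(\Om)$, since the estimate above passes to limits.

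\textbf{Diagonal argument.} Take $\de_k=1/(k+4)$ and extract successive subsequences converging in $L^2(\Om_{\de_k})$; the diagonal subsequence $\{u_{n_j}\}$ converges in $L^2(\Om_{\de_k})$ for every $k$. Given $\e>0$, choose $k$ with $8\de_k^{2-\al}M^2/(1-\al)^2<\e/2$. Then
\begin{equation*}
\|u_{n_i}-u_{n_j}\|_{L^2(\Om)}^2\le \|u_{n_i}-u_{n_j}\|_{L^2(\Om_{\de_k})}^2+2\int_{\Om\setminus\Om_{\de_k}}\bigl(u_{n_i}^2+u_{n_j}^2\bigr)\df x.
\end{equation*}
The second term is less than $\e/2$ by the first step, and the first term is less than $\e/2$ for large $i,j$. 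So the subsequence is Cauchy in $L^2(\Om)$, which proves compactness of the embedding.

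The main obstacle is the uniform tail estimate near $\Ga_2^0$, and it is handled directly by Lemma \ref{03.11.L1}. The rest is standard.
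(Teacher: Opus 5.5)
Your proof is correct and follows the same route as the paper. The Hardy inequality of Lemma \ref{03.11.L1} makes the $L^2$ mass in the strip $\{x_2<\de\}$ uniformly small, and Rellich's theorem on $\Om_\de$, where $x_2^\al\ge\de^\al$, handles the rest; the only difference is that you extract an $L^2$-Cauchy subsequence by a diagonal argument, whereas the paper passes to a weak limit and reduces to $u_0=0$. One arithmetic nit: $2\int_{\Om\setminus\Om_{\de_k}}(u_{n_i}^2+u_{n_j}^2)\df x$ is bounded by $16\de_k^{2-\al}M^2/(1-\al)^2$, so your choice of $k$ only makes it less than $\e$ rather than $\e/2$, which is harmless once the constant is adjusted.
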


\begin{proof}
	Let $\{u_n\}_{n\in\N}\s H_0^1(\Om;w)$ is a bounded sequence. i.e., $\|u_n\|_{H_0^1(\Om;w)}\leq C_0$ for some positive constant $C_0$. Then there exists a subsequence of $\{u_n\}_{n\in\N}$, still denoted by itself, and $u_0\in H_0^1(\Om;w)$ such that $u_n\ra u_0$ weakly in $H_0^1(\Om;w)$ as $n\ra\iy$. Now, we shall prove that there exists a subsequence $\{u_{n_k}\}_{k\in\N}$ of $\{u_n\}_{n\in\N}$ such that $u_{n_k}\ra u_0$ strongly in $L^2(\Om)$ as $k\ra\iy$. We assume $u_0=0$, for otherwise, we replace $u_n$ by  $u_{n}-u_0$ for all $n\in\N$.

	Let $\e>0$. For each $\de\in (0,\f{1}{4})$, denote $\Om_\de=(-1,1)\ts (\de,1)$, from Lemma \ref{03.11.L1}, we have 
	\begin{equation*}
		\int_{\Om-\Om_\de}u^2\df x=\int_{\Om-\Om_\de} x_2^{2-\al}x_2^{\al-2}u^2\df x\leq \de^{2-\al}\f{4}{(1-\al)^2}\int_\Om x_2^\al \left|\pt_{x_2}u\right|^2\df x\leq \de^{2-\al}\f{4C_0^2}{(1-\al)^2}. 
	\end{equation*}
	Choosing $\de_0>0$ small enough, for all $\de\in (0,\de_0]$, we have $\de^{2-\al}\f{4C_0^2}{(1-\al)^2}\leq \f{\e^2}{4}$. 
	
	Note that for fixed $\de_0$, the embedding $H^1(\Om_{\de_0};w_{\de_0})=H^1(\Om_{\de_0})\hra L^2(\Om_{\de_0})$ is compact by the classical Sobolev embedding theorem, hence, there exists a $k=k(\de_0)\in \N$ such that $u_{n_{k}}$ satisfies $\|u_{n_k}\|_{L^2(\Om_{\de_0})}<\f{\e}{2}$, since $u_n\ra 0$ weakly in $L^2(\Om_{\de_0})$.  Hence, $\|u_{n_k}\|_{L^2(\Om)}<\e$. This complete the proof of this lemma. 
\end{proof}

\subsection{Spectrum}

Combining Remark \ref{03.11.R1}, Lemma \ref{03.11.L2}, and Lemma 3.1 of \cite{YangGuo1},  we obtain the discrete spectrum of the partial differential operator $\mcA$:
\begin{equation*}
	0< \la_1<\la_2\leq \la_3\leq \cdots \ra +\iy. 
\end{equation*}
Note that 
\begin{equation*}
	\f{(1-\al)^2}{4}\leq \la_1=\inf_{0\neq u\in H_0^1(\Om;w)}\f{\int_\Om A\nabla u\cdot \nabla u \df x}{\int_\Om u^2\df x}. 
\end{equation*}

\begin{notation}\label{03.11.N1}
	Let $\Phi_n\ (n\in\N)$ be the eigenfunction of $\mcA$ with respect to the eigenvalue $\la_n$. i.e., 
	\begin{equation}\label{03.11.2}
		\begin{cases}
			\mcA\Phi_n=\la_n\Phi_n, &\mbox{in }\Om,\\
			\Phi_n=0, &\mbox{on }\pt\Om. 
		\end{cases}
	\end{equation}
	Moreover, set $\|\Phi_n\|_{L^2(\Om)}=1$ for all $n\in\N$, then  $\{\Phi_n\}_{n\in\N}$ is a orthonormal basis of $L^2(\Om)$, and  $\{\Phi_n\}_{n\in\N}$ is the orthonormal basis of $L^2(\Om)$, moreover, $\{\Phi_n\}_{n\in\N}$ is an orthogonal subset of $H_0^1(\Om;w)$. (See \cite[Theorem 7 (pp. 728) in Appendix D]{Evans}, or the proof of Lemma \ref{06.28.L1} in the following.) 
\end{notation}

\begin{lemma}\label{06.28.L1}
	Let $u=\sum\limits_{i=1}^\iy u_i \Phi_i\in H_0^1(\Om;w)$ with $u_i=(u,\Phi_i)_{L^2(\Om)}$ for all $i\in\N$. We have $\nabla u=\sum\limits_{i=1}^\iy u_i\nabla \Phi_i$ and $\|u\|_{H_0^1(\Om;w)}=(\sum\limits_{i=1}^\iy u_i^2\la_i)^\f{1}{2}$, and
	\begin{equation*}
		u\in H^2(\Om;w)\Lra \sum_{i=1}^\iy u_i^2\la_i^2<\iy,
	\end{equation*}
	and
	\begin{equation*}
		\mcA u=\sum_{i=1}^\iy u_i\la_i\Phi_i \mbox{ and } \|\mcA u\|_{L^2(\Om)}=\left(\sum_{i=1}^\iy u_i^2\la_i^2\right)^\f{1}{2}.
	\end{equation*}
\end{lemma}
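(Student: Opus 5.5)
The plan is to work in the Hilbert space $H_0^1(\Om;w)$ with the equivalent inner product $a(u,v)=\int_\Om A\nabla u\cdot\nabla v\,\df x$ from Remark \ref{03.11.R1}, and to use the weak eigenvalue relations. By definition of $\Phi_n$ as eigenfunctions of \eqref{03.11.2} in the weak sense, $a(\Phi_n,v)=\la_n(\Phi_n,v)_{L^2(\Om)}$ for all $v\in H_0^1(\Om;w)$. Taking $v=\Phi_m$ gives $a(\Phi_n,\Phi_m)=\la_n\de_{nm}$. Hence $\{\Phi_n/\sqrt{\la_n}\}$ is an orthonormal family in $H_0^1(\Om;w)$.

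\textbf{Step 1 (completeness in $H_0^1(\Om;w)$).} Suppose $v\in H_0^1(\Om;w)$ satisfies $a(v,\Phi_n)=0$ for all $n$. Then $\la_n(v,\Phi_n)_{L^2}=0$, so $(v,\Phi_n)_{L^2}=0$ for every $n$. Since $\{\Phi_n\}$ is an orthonormal basis of $L^2(\Om)$, this forces $v=0$. Thus $\{\Phi_n/\sqrt{\la_n}\}$ is an orthonormal basis of $H_0^1(\Om;w)$.

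\textbf{Step 2 (identifying the coefficients and the norm).} For $u\in H_0^1(\Om;w)$ we have $a(u,\Phi_i/\sqrt{\la_i})=\sqrt{\la_i}(u,\Phi_i)_{L^2}=\sqrt{\la_i}\,u_i$. Therefore $u=\sum u_i\Phi_i$ with convergence in $H_0^1(\Om;w)$, and Parseval gives $\|u\|_{H_0^1(\Om;w)}^2=\sum u_i^2\la_i$. Convergence in $H_0^1(\Om;w)$ means $A^{1/2}\nabla$ of the partial sums converges in $L^2$. Since $x_2^{\al/2}>0$ a.e., a subsequence converges a.e., so $\nabla u=\sum u_i\nabla\Phi_i$ in the weighted $L^2$ sense (and a.e. along partial sums).

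\textbf{Step 3 (characterizing $H^2(\Om;w)$).} Here $\mcA u\in L^2$ is understood in the distributional/weak sense, $(\mcA u,v)_{L^2}=a(u,v)$ for test functions $v$, and by density for all $v\in H_0^1(\Om;w)$.

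\emph{Forward direction.} If $u\in D(w)$ and $g=\mcA u\in L^2$, then $(g,\Phi_i)_{L^2}=a(u,\Phi_i)=\la_i u_i$. By Parseval in $L^2$, $\mcA u=\sum \la_i u_i\Phi_i$ and $\|\mcA u\|_{L^2}^2=\sum u_i^2\la_i^2<\iy$.

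\emph{Converse.} If $\sum u_i^2\la_i^2<\iy$, set $g=\sum \la_i u_i\Phi_i\in L^2$. For any $v\in H_0^1(\Om;w)$, expand $v$ by Step 1 and use continuity of $a$ to get $a(u,v)=\sum \la_i u_i v_i=(g,v)_{L^2}$. In particular this holds for $v\in C_0^\iy(\Om)$, so $\mcA u=g\in L^2$ and $u\in H^2(\Om;w)$.

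The main obstacle is the interpretation of $\mcA u\in L^2$ in the weighted setting. One must check that the distributional $\mcA u$ pairs with every $v\in H_0^1(\Om;w)$ through $a(u,v)$, i.e. that testing against $C_0^\iy(\Om)$ extends by density. This extension works because both sides are continuous in $v$ for the $H_0^1(\Om;w)$ norm, using $L^2\hookleftarrow H_0^1(\Om;w)$ from \eqref{03.11.1}.
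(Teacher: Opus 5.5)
Your proposal is correct and follows essentially the same route as the paper. Both arguments prove orthonormality and completeness of $\{\la_k^{-1/2}\Phi_k\}$ in $H_0^1(\Om;w)$ from the $L^2$-basis property, identify the coefficients as $\la_i^{1/2}u_i$, and test $\mcA u$ against $\Phi_i$ (forward direction) and against $C_0^\iy(\Om)$ (converse). The differences are cosmetic. In the forward direction you read off the $L^2$-Fourier coefficients of $\mcA u$ and apply Parseval directly, whereas the paper bounds partial sums by Cauchy--Schwarz with $\vp_n=\sum_{i\le n}u_i\Phi_i$. You also spell out the density step that the paper leaves implicit in its identity $\int_\Om(\mcA u)\Phi_i\,\df x=\int_\Om u\,\mcA\Phi_i\,\df x$.
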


\begin{proof}
	From \eqref{03.11.2}, we have
	\begin{equation}\label{06.04.3}
		\int_\Om (\nabla \Phi_k\cdot\nabla \Phi_l)w\df x=\de_{kl}\la_k \mbox{ for } k,l\in \N,
	\end{equation}
	where $\de_{kl}$ is the Kronecker delta function, defined as $\de_{kl}=1$ for $k=l$ and $\de_{kl}=0$ for $k\neq l$.
	We now demonstrate that $\{\la_k^{-\f{1}{2}}\Phi_k\}_{k=1}^\iy$ forms an orthonormal basis of $H_0^1(\Om;w)$.
	From \eqref{06.04.3}, it is straightforward to verify that $\{\la_k^{-\f{1}{2}}\Phi_k\}_{k=1}^\iy$ is an orthonormal subset of $H_0^1(\Om;w)$. To prove it is a basis, assume by contradiction that there exists $0\neq u\in H_0^1(\Om;w)$ such that
	\begin{equation*}
		\int_\Om (\nabla \Phi_k\cdot \nabla u)w\df x=0 \mbox{ for all }k\in\N.
	\end{equation*}
	Given that $\{\Phi_k\}_{k\in\N}$ is an orthonormal basis of $L^2(\Om)$, for $u\in H_0^1(\Om;w)$, we can express
	\begin{equation}\label{06.04.4}
		u=\sum_{k=1}^\iy d_k\Phi_k \mbox{ where }  d_k=(u, \Phi_k)_{L^2(\Om)}, k\in\N.
	\end{equation}
	Then  we have
	\begin{equation*}
		0=(u,\Phi_k)_{H_0^1(\Om;w)}=\int_\Om (\nabla \Phi_k\cdot \nabla u)w\df x=\la_k\int_\Om \Phi_ku\df x =\la_kd_k,
	\end{equation*}
	which implies $d_k=0$. This leads to $u=0$, a contradiction. 
	
	From above, since $u\in H_0^1(\Om;w)$,  we have 
	\begin{equation*}
		\nabla u=\sum_{i=1}^\iy e_i\la_i^{-\f{1}{2}}\nabla\Phi_k, \mbox{ and } \|u\|_{H_0^1(\Om;w)}^2=\sum_{i=1}^\iy e_i^2,
	\end{equation*}
	and 
	\begin{equation*}
		e_i=\int_\Om \left(\nabla u\cdot \la_i^{-\f{1}{2}}\nabla\Phi_i\right) w\df x=\la_i^{-\f{1}{2}}\int_\Om (\nabla u\cdot \nabla \Phi_i)w\df x=\la_i^\f{1}{2}\int_\Om u\Phi_i\df x=\la_i^\f{1}{2}u_i, 
	\end{equation*} 
	hence,  $\nabla u=\sum\limits_{i=1}^\iy u_i\nabla\Phi_i$. Moreover, $\|u\|_{H_0^1(\Om;w)}=(\sum\limits_{i=1}^\iy \la_iu_i^2)^\f{1}{2}$. 
	
	Let $u\in H^2(\Om;w)$. Take $\vp_n=\sum\limits_{i=1}^n u_i\Phi_i\in H_0^1(\Om;w)\cap H^2(\Om;w)$ for each $n\in\N$, then from
	\begin{equation*}
		\begin{split}
			(\mcA u, \mcA\vp_n)_{L^2(\Om)}
			&=\sum_{i=1}^n u_i\la_i \int_\Om (\mcA u)\Phi_i\df x=\sum_{i=1}^n u_i\la_i\int_\Om u\mcA\Phi_i\df x\\
			&=\sum_{i=1}^n u_i\la_i^2\int_\Om u\Phi_i\df x=\sum_{i=1}^n u_i^2\la_i^2
		\end{split}
	\end{equation*}
	and $\|\mcA\vp_n\|_{L^2(\Om)}^2=\sum_{i=1}^n u_i^2\la_i^2$ we obtain
	\begin{equation*}
		\sum_{i=1}^nu_i^2\la_i^2\leq \|\mcA u\|_{L^2(\Om)}^2
	\end{equation*}
	for all $n\in\N$ by Cauchy inequality. This implies that $\sum_{i=1}^\iy u_i^2\la_i^2\leq \|\mcA u\|_{L^2(\Om)}^2<\iy$.

	Let $\sum\limits_{i=1}^\iy u_i^2\la_i^2<\iy$. For each  $\vp\in C_0^\iy(\Om)$,  we have
	\begin{equation*}
		\begin{split}
			(\mcA u, \vp)_{L^2(\Om)}
			&=\int_\Om (\nabla u\cdot \nabla\vp)w\df x=\sum_{i=1}^\iy u_i \int_\Om (\nabla\Phi_i\cdot \nabla\vp)w\df x=\sum_{i=1}^\iy u_i\la_i\int_\Om \Phi_i\vp\df x.
		\end{split}
	\end{equation*}
	Note that
	\begin{equation*}
		\int_\Om \left(\sum_{i=1}^n u_i\la_i\Phi_i\right)^2\df x=\sum_{i=1}^n u_i^2\la_i^2\leq \sum_{i=1}^\iy u_i^2\la_i^2<\iy \mbox{ for all } n\in\N,
	\end{equation*}
	i.e., $\sum\limits_{i=1}^\iy u_i\la_i\Phi_i\in L^2(\Om)$, Hence
	\begin{equation*}
		(\mcA u, \vp)_{L^2(\Om)}=\left(\sum_{i=1}^\iy u_i\la_i\Phi_i, \vp\right)_{L^2(\Om)}.
	\end{equation*}
	This implies that $\|\mcA u\|_{L^2(\Om)}\leq \sum\limits_{i=1}^\iy u_i^2\la_i^2$ and $\mcA u=\sum\limits_{i=1}^\iy u_i\la_i\Phi_i$.
\end{proof}

\subsection{Existence of weak solution}

\begin{definition}\label{03.11.D1}
	Let $y^0\in H_0^1(\Om;w), y^1\in L^2(\Om)$ and $f\in L^2(Q)$. We call 
	\begin{equation*}
		y\in L^2(0,T; H_0^1(\Om;w))\cap H^1(0,T; L^2(\Om))\cap H^2(0,T; H^{-1}(\Om;w))
	\end{equation*}
	is a weak solution of  \eqref{01.10.1} with respect to $(y^0,y^1,f)$, provided 
	
	(i) for each $v\in H_0^1(\Om;w)$ and a.e.~$t\in [0,T]$, we have
	\begin{equation*}
		\lg\pt_{tt}y, v\rg_{H^{-1}(\Om;w), H_0^1(\Om;w)}+(y, v)_{H_0^1(\Om;w)}=(f,v)_{L^2(\Om)}, 
	\end{equation*}
	
	(ii) $y(0)=y^0$ and $\pt_ty(0)=y^1$. 
\end{definition}

\begin{theorem}\label{03.11.T1}
	Let $y^0\in H_0^1(\Om;w)$, $y^1\in L^2(\Om)$ and $f\in L^2(Q)$. Then there exists a unique weak solution 
	\begin{equation*}
		y\in L^2(0,T; H_0^1(\Om;w))\cap H^1(0,T; L^2(\Om))\cap H^2(0,T; H^{-1}(\Om;w))
	\end{equation*}
	for the equation \eqref{01.10.1} with respect to $(y^0,y^1, f)$, and 
	\begin{equation}\label{03.12.1}
		\begin{split}
			&\esssup_{t\in [0,T]}\left(\|y(t)\|_{H_0^1(\Om;w)}+\|\pt_ty\|_{L^2(\Om)}\right)+\|\pt_{tt}y\|_{L^2(0,T; H^{-1}(\Om;w))}+\left\|\f{\pt y}{\pt \nu}\right\|_{L^2(0,T; L^2(\Ga\cap\pt\Om_\f{1}{8}))}\\
			&\leq C\left(\|y^0\|_{H_0^1(\Om;w)}+\|y^1\|_{L^2(\Om)}+\|f\|_{L^2(Q)}\right), 
		\end{split}
	\end{equation}
	where the positive constant $C$ depending only on $\al$ and $T$. 
	
	Furthermore, if in addition $y^0\in D(\mcA)$, and $y^1\in H_0^1(\Om;w)$ and $f\in H^1(0,T; L^2(\Om))$, then 
	\begin{equation}\label{03.12.2}
		\begin{split}
			&\esssup_{t\in [0,T]}\left(\|y\|_{H^2(\Om_\f{1}{8})}+\|y\|_{D(\mcA)}+\|\pt_ty\|_{H_0^1(\Om;w)}+\|\pt_{tt}y\|_{L^2(\Om)}\right)+\left\|\f{\pt (\pt_ty)}{\pt \nu}\right\|_{L^2(0,T; L^2(\Ga\cap\pt\Om_\f{1}{8}))} \\
			&\leq C\left(\|y^0\|_{D(\mcA)}+\|y^1\|_{H_0^1(\Om;w)}+\|f\|_{H^1(0,T; L^2(\Om))}\right), 
		\end{split}
	\end{equation}
	where the positive constant $C$ depending only on $\al$ and $T$. 
\end{theorem}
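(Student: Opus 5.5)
We will construct the solution by the Faedo--Galerkin method, using the eigenbasis $\{\Phi_n\}$ of Notation \ref{03.11.N1}. We write $y_N(t)=\sum_{i=1}^N g_i(t)\Phi_i$, where each $g_i$ solves the scalar ODE
\begin{equation*}
g_i''+\la_i g_i=f_i:=(f(t),\Phi_i)_{L^2(\Om)},\qquad g_i(0)=(y^0,\Phi_i)_{L^2(\Om)},\qquad g_i'(0)=(y^1,\Phi_i)_{L^2(\Om)}.
\end{equation*}
Multiplying by $g_i'$ and summing gives
\begin{equation*}
\f{d}{dt}\sum_i\big(|g_i'|^2+\la_i|g_i|^2\big)=2\sum_i f_ig_i'.
\end{equation*}
Lemma \ref{06.28.L1} identifies $\sum_i\la_i|g_i|^2=\|y_N\|^2_{H_0^1(\Om;w)}$, so Gronwall's inequality yields the uniform energy bound
\begin{equation*}
\sup_t\big(\|y_N\|_{H_0^1(\Om;w)}+\|\pt_ty_N\|_{L^2(\Om)}\big)\le C(T)\big(\|y^0\|_{H_0^1(\Om;w)}+\|y^1\|_{L^2(\Om)}+\|f\|_{L^2(Q)}\big).
\end{equation*}
For the bound on $\pt_{tt}y_N$ in $H^{-1}(\Om;w)$ we use duality with the projection onto $\Span\{\Phi_1,\dots,\Phi_N\}$, which is orthogonal in both $L^2$ and $H_0^1(\Om;w)$. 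We then pass to weak-$*$ limits and obtain a weak solution in the sense of Definition \ref{03.11.D1}. The initial conditions hold by continuity, $y\in C([0,T];L^2)\cap C^1([0,T];H^{-1})$. Uniqueness comes from the energy identity for the difference of two solutions with zero data, proved by the standard Lions--Magenes argument with the test function $\int_t^s y$. Alternatively, since $\{\Phi_i\}$ diagonalizes the problem, the coefficients of the difference satisfy $g''+\la_i g=0$ with zero data, so $g\equiv 0$. The constants depend only on $\al$ through the Poincaré constant (Remark \ref{03.11.R1}) and on $T$.

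For the regular case, $y^0\in D(\mcA)$, $y^1\in H_0^1(\Om;w)$ and $f\in H^1(0,T;L^2)$. We multiply the ODEs by $\la_i g_i'$, or equivalently apply the first estimate to $\pt_t y$, whose data are $(y^1,\ f(0)-\mcA y^0,\ \pt_tf)$. This controls $\|\pt_ty\|_{H_0^1(\Om;w)}+\|\pt_{tt}y\|_{L^2}$. The equation $\mcA y=f-\pt_{tt}y$ then bounds $\|y\|_{D(\mcA)}$ via Lemma \ref{06.28.L1}; here we use $H^1(0,T;L^2)\hookrightarrow C([0,T];L^2)$ to control $\sup_t\|f(t)\|$.

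The interior $H^2$ bound on $\Om_{1/8}$ and the normal-derivative traces are local statements away from the degeneracy. On $\Om_{1/16}$ we have $x_2\ge 1/16$, so $\mcA$ is uniformly elliptic with smooth coefficients there. Take a cutoff $\chi(x_2)$ that equals $1$ for $x_2\ge 1/8$ and vanishes for $x_2\le 1/16$. Then $\chi y$ vanishes on the rectangle's boundary portion $\Ga\cap\pt\Om_{1/16}$ and near $x_2=1/16$, and it satisfies
\begin{equation*}
\mcA(\chi y)=\chi(f-\pt_{tt}y)+[\mcA,\chi]y\in L^2.
\end{equation*}
The commutator $[\mcA,\chi]$ involves only first derivatives of $y$ on a region where the weight is bounded below, so the $H_0^1(\Om;w)$ bound controls it. Elliptic regularity on a rectangle (convex, Dirichlet data; corners pose no problem for $H^2$) then gives $\|\chi y\|_{H^2}\le C(\|y\|_{D(\mcA)}+\|f\|+\|\pt_{tt}y\|)$, and hence the trace $\pt(\pt_t y)/\pt\nu$ would follow from the same reasoning applied to $\pt_ty$. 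That, however, needs $\pt_ty\in D(\mcA)$, which we do not have.

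The main obstacle is therefore the hidden-regularity trace in \eqref{03.12.1}, where $y$ is only in $H_0^1$ with $\pt_ty\in L^2$, so plain elliptic regularity fails. For this step we use the classical multiplier method localized to the nondegenerate region. Take a smooth vector field $h=\chi(x_2)\,\nu$ extended so that $h=\nu$ on $\Ga\cap\pt\Om_{1/8}$ and $h$ is supported in $\{x_2>1/16\}$. Multiply the equation by $h\cdot A\nabla y$ and integrate over $Q$. This produces $\f12\int_0^T\!\int_{\pt\Om} (h\cdot\nu)|\pt_\nu y|_A^2$ plus interior terms that are bounded by the energy, together with $\int_\Om \pt_ty\, h\cdot\nabla y\,\big|_0^T$. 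We justify the identity first for the smooth Galerkin approximations, which lie in $D(\mcA)$ and are $H^2$ near $\Ga$ by the localized elliptic argument above, using the regularity of the $\Phi_i$ there. We then pass to the limit by weak lower semicontinuity of the trace norm. The same multiplier applied to $\pt_ty$ gives the last term of \eqref{03.12.2}. The delicate points are the corners of $\Ga$, where $h\cdot\nu$ must remain nonnegative and bounded below on $\Ga\cap\pt\Om_{1/8}$; we handle them by choosing $h$ as a smooth field, for example $h=\chi(x_2)(x_1,\ x_2-\tfrac12)$ suitably adjusted, so that $h\cdot\nu>0$ on every face of $\Ga\cap\pt\Om_{1/8}$.
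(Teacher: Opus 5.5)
Your proposal is correct and follows essentially the same route as the paper. Both use the eigenfunction Galerkin scheme with energy estimates and weak limits, uniqueness via the test function $\int_t^s y$, higher regularity by applying the energy estimate to $\partial_t y$ together with a cutoff elliptic argument away from $x_2=0$, and a multiplier localized away from the degenerate side for the conormal traces (again applied to $\partial_t y$). The only differences are cosmetic. You use a single Lions-type field $h=\chi(x_2)(x_1,x_2-\tfrac12)$ that covers all three sides of $\Gamma\cap\partial\Omega_{1/8}$ at once, where the paper uses side-by-side multipliers such as $\zeta_1\zeta_2\partial_{x_1}y$. You also justify the identity on the Galerkin approximations rather than by density from regular data.
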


\begin{proof}
	We prove this theorem by the following steps. 
	
	{\it Step 1}. Galerkin method. 
	
	Let $n\in\N$. Denote 
	\begin{equation}\label{03.11.3}
		y^m(x,t)=\sum_{n=1}^m y_n^m(t)\Phi_n(x), \quad f^m(x,t)=\sum_{n=1}^m f_n^m(t)\Phi_n(x), 
	\end{equation}
	where $y_n^m(t), n=1,\cdots, m$ satisfies the following equation 
	\begin{equation}\label{03.11.4}
		\f{\df^2}{\df t^2}y_n^m(t)+\la_ny_n^m(t)=f_n^m(t), \mbox{ for } t\in [0,T], n=1,\cdots, m, 
	\end{equation}
	with initial data 
	\begin{equation}\label{03.11.5}
		\begin{split}
			y_n^m(0)=(y^0,\Phi_n)_{L^2(\Om)}, \quad \f{\df y_n^m}{\df t}(0)=(y^1, \Phi_n)_{L^2(\Om)}, \quad f_n^m(t)=(f,\Phi_n)_{L^2(\Om)}, \ n=1,\cdots,m. 
		\end{split}
	\end{equation}
	It is obviously that \eqref{03.11.4} is equivalent to the following equation
	\begin{equation}\label{03.11.6}
		(\pt_{tt}y^m, \Phi_n)_{L^2(\Om)}+(y^m,\Phi_n)_{H_0^1(\Om;w)}=(f^m,\Phi_n)_{L^2(\Om)}, \ n=1,\cdots, m. 
	\end{equation}

	{\it Step 2}. Energy estimate.
	
	Multiplying \eqref{03.11.6} by $\f{\df}{\df t}y_n^m$, summing $n=1,\cdots,m$, then 
	\begin{equation*}
		(\pt_{tt}y^m,\pt_ty^m)_{L^2(\Om)}+(y^m, \pt_ty^m)_{H_0^1(\Om;w)}=(f^m,\pt_ty^m)_{L^2(\Om)}. 
	\end{equation*}
	Note that 
	\begin{equation*}
		(\pt_{tt}y^m,\pt_ty^m)_{L^2(\Om)}=\f{1}{2}\f{\df }{\df t} \|\pt_ty^m\|_{L^2(\Om)}^2, \mbox{ and } (y^m,\pt_ty^m)_{H_0^1(\Om;w)}=\f{1}{2}\f{\df}{\df t}\|y^m\|_{H_0^1(\Om;w)}^2, 
	\end{equation*}
	we get
	\begin{equation*}
		\f{\df}{\df t}\left(\|\pt_ty^m\|_{L^2(\Om)}^2+\|y^m\|_{H_0^1(\Om;w)}^2\right)\leq \|f^m\|_{L^2(\Om)}^2+\|\pt_ty^m\|_{L^2(\Om)}^2. 
	\end{equation*}
	From the Gronwall's inequality \cite[B.~2 (j) in Appendix B, p.~707]{Evans}, we get
	\begin{equation*}
		\begin{split}
			\|\pt_ty^m\|_{L^2(\Om)}^2+\|y^m\|_{H_0^1(\Om;w)}^2\leq e^t\left(\|\pt_ty^m(0)\|_{L^2(\Om)}^2+\|y^m(0)\|_{H_0^1(\Om;w)}^2+\|f\|_{L^2(Q)}^2\right)
		\end{split}
	\end{equation*}
	for all $t\in [0,T]$. Together with this and \eqref{03.11.5} and Lemma \ref{06.28.L1}, we get
	\begin{equation*}
		\|\pt_ty^m\|_{L^2(\Om)}^2+\|y^m\|_{H_0^1(\Om;w)}^2\leq e^t\left(\|y^0\|_{H_0^1(\Om;w)}^2+\|y^1\|_{L^2(\Om)}^2+\|f\|_{L^2(Q)}^2\right)
	\end{equation*}
	for all $t\in [0,T]$. i.e., 
	\begin{equation}\label{03.11.7}
		\esssup_{t\in [0,T]}\left(	\|\pt_ty^m\|_{L^2(\Om)}^2+\|y^m\|_{H_0^1(\Om;w)}^2\right)\leq e^T\left(\|y^0\|_{H_0^1(\Om;w)}^2+\|y^1\|_{L^2(\Om)}^2+\|f\|_{L^2(Q)}^2\right). 
	\end{equation}
	
	Now, for each $v\in H_0^1(\Om;w)$ with $\|v\|_{H_0^1(\Om;w)}\leq 1$, we write $v=v^1+v^2$ with $v^1\in \Span\{\Phi_n\}_{n=1}^m$ and $v^2\bot\Phi_n=0$ for all $n=1,\cdots, m$, then, from 
	\begin{equation*}
		\begin{split}
			\lg\pt_{tt}y^m,v\rg_{H^{-1}(\Om;w), H_0^1(\Om;w)}
			&=(\pt_{tt}y^m, v)_{L^2(\Om)}=(\pt_{tt}y^m,v^1)_{L^2(\Om)}=(f^m,v^1)_{L^2(\Om)}-(y^m,v^1)_{H_0^1(\Om;w)}
		\end{split}
	\end{equation*}
	and Remark \ref{03.11.R1}, 
	we get 
	\begin{equation*}
		\begin{split}
			\|\pt_{tt}y^m\|_{H^{-1}(\Om;w)}
			&\leq \f{2}{1-\al}\|f^m\|_{L^2(\Om)}+\|y^m\|_{H_0^1(\Om;w)}
		\end{split}
	\end{equation*}
	by $\|v^1\|_{H_0^1(\Om;w)}\leq 1$. This shows that 
	\begin{equation}\label{03.11.8}
		\begin{split}
			\|\pt_{tt}y^m\|_{L^2(0,T; H^{-1}(\Om;w))}\leq C\left(\|y^0\|_{H_0^1(\Om;w)}+\|y^1\|_{L^2(\Om)}+\|f\|_{L^2(Q)}\right)
		\end{split}
	\end{equation}
	by \eqref{03.11.7}, where the positive constant $C$ depending only on $\al$ and $T$. 
	
	{\it Step 3}. Approximation, or weakly convergence. 
	
	From \eqref{03.11.7} and  \eqref{03.11.8}, there exists 
	\begin{equation*}
		z\in L^2(0,T; H_0^1(\Om;w))\cap H^1(0,T; L^2(\Om))\cap H^2(0,T; H^{-1}(\Om;w))
	\end{equation*}
	such that 
	\begin{equation}\label{03.11.9}
		\begin{split}
			y^m
			&\ra z\hspace{4mm} \mbox{ weak star in  } L^\iy(0,T; H_0^1(\Om;w)), \\
			\pt_ty^m
			&\ra \pt_t z\hspace{1mm} \mbox{ weak star in  } L^\iy(0,T; L^2(\Om)), \\
			\pt_{tt}y^m
			&\ra \pt_{tt}z \mbox{ weakly in } L^2(0,T; H^{-1}(\Om;w)). 
		\end{split}
	\end{equation}
	Moreover, we have
	\begin{equation}\label{03.11.10}
		\begin{split}
			&\esssup_{t\in [0,T]}\left(\|\pt_tz\|_{L^2(\Om)}^2+\|z\|_{H_0^1(\Om;w)}\right)+\|\pt_{tt}z\|_{L^2(0,T; H^{-1}(\Om;w))}\\
			&\leq C\left(\|y^0\|_{H_0^1(\Om;w)}+\|y^1\|_{L^2(\Om)}+\|f\|_{L^2(Q)}\right), 
		\end{split} 
	\end{equation}
	where the positive constant $C$ depending only on $\al$ and $T$. Moreover, 
	\begin{equation}\label{03.11.11}
		z\in C([0,T];L^2(\Om)), \mbox{ and } \pt_t z\in C([0,T]; H^{-1}(\Om;w)). 
	\end{equation}
	
	{\it Step 4}. We prove $z$ is a weak solution of the equation \eqref{01.10.1}. 
	
	Let $k\in\N$, and 
	\begin{equation}\label{03.11.13}
		\psi=\sum_{n=1}^k\psi_n(t)\Phi_n(x) \mbox{ with } \psi_n(t)\in C^2[0,T], n=1,\cdots, k. 
	\end{equation}
	Multiplying \eqref{03.11.6} by $\psi_n$, summing $n=1,\cdots, k$, integrating on $[0,T]$, then, for all $m\geq k$, we have 
	\begin{equation}\label{03.11.14}
		\int_0^T \lg\pt_{tt}y^m, \psi\rg_{H^{-1}(\Om;w), H_0^1(\Om;w)}\df t+\int_0^T(y^m,\psi)_{H_0^1(\Om;w)}\df t=\int_0^T(f^m,\psi)_{L^2(\Om)}\df t,  
	\end{equation}
	and hence
	\begin{equation}\label{03.11.12}
		\int_0^T\lg\pt_{tt}z, \psi\rg_{H^{-1}(\Om;w), H_0^1(\Om;w)}\df t+\int_0^T(z, \psi)_{H_0^1(\Om;w)}\df t=\int_0^T(f,\psi)_{L^2(\Om)}\df t
	\end{equation}
	by \eqref{03.11.9} and $f^m\ra f$ strongly in $L^2(Q)$. Note that the set of the functions $\psi$ is dense in $L^2(0,T; H_0^1(\Om;w))$, hence \eqref{03.11.12} holds for all $\psi\in L^2(0,T; H_0^1(\Om;w))$. Therefore, 
	\begin{equation*}
		\lg \pt_{tt}z,v\rg_{H^{-1}(\Om;w), H_0^1(\Om;w)}+(z,v)_{H_0^1(\Om;w)}=(f,v)_{L^2(\Om)} 
	\end{equation*}
	for all $v\in H_0^1(\Om;w)$ and a.e.~$t\in [0,T]$.  This proves Definition \ref{03.11.D1} (i). 
	
 We further impose the conditions $\psi (T)=0$ and  $\partial_t \psi (T)=0$. On one hand, integration by parts in \eqref{03.11.12} with respect to $t\in [0,T]$, we get
	\begin{equation*}
		\begin{split} 
		&\int_0^T(z,\pt_{tt}\psi)_{L^2(\Om)}\df t+\int_0^T(z,\psi)_{H_0^1(\Om;w)}\df t\\
		&=\int_0^T(f,\psi)_{L^2(\Om)}\df t-(z(0),\pt_t\psi(0))_{L^2(\Om)}+\lg\pt_tz(0), \psi(0)\rg_{H^{-1}(\Om;w), H_0^1(\Om;w)}. 
		\end{split} 
	\end{equation*}
	On the other hand, 
	integration by   parts in \eqref{03.11.14} with respect to $t\in [0,T]$, we get
	\begin{equation*}
		\begin{split} 
		&\int_0^T(y^m,\pt_{tt}\psi)_{L^2(\Om)}\df t+\int_0^T(y^m, \psi)_{H_0^1(\Om;w)}\df t\\
		&=\int_0^T(f^m,\psi)_{L^2(\Om)}\df t-(y^m(0),\pt_t\psi(0))_{L^2(\Om)}+(\pt_ty^m(0), \psi(0))_{L^2(\Om)}, 
		\end{split} 
	\end{equation*}
	and then 
	\begin{equation*}
		\begin{split}
			&\int_0^T(z,\pt_{tt}\psi)_{L^2(\Om)}\df t+\int_0^T(z,\psi)_{H_0^1(\Om;w)}\df t\\
			&=\int_0^T(f,\psi)_{L^2(\Om)}\df t-(y^0,\pt_t\psi(0))_{L^2(\Om)}+(y^1,\psi(0))_{L^2(\Om)}
		\end{split}
	\end{equation*}
	by \eqref{03.11.9} and \eqref{03.11.5}. Therefore, $z(0)=y^0$ and $\pt_tz(0)=y^1$. This proves Definition \ref{03.11.D1} (ii). 
	
	Overall, we have proved $z$ is a weak solution of the equation \eqref{01.10.1}. 
	
	{\it Step 5}. Uniqueness. 
	
	We only need to show the solution of \eqref{01.10.1} with respect to $(0,0,0)$ is zero function. To verify this, fix $0\leq s\leq T$ and set
	\begin{equation*}
		\xi(t)=
		\begin{cases}
			\int_t^s z(\tau)\df \tau, &\mbox{if } 0\leq t\leq s, \\
			0, &\mbox{if } s\leq t\leq T. 
		\end{cases}
	\end{equation*}
	Then $\xi(t)\in H_0^1(\Om;w)$ for each $t\in [0,T]$, and hence
	\begin{equation*}
		\int_0^s \lg \pt_{tt} z, \xi\rg_{H^{-1}(\Om;w), H_0^1(\Om;w)}\df t+\int_0^s(z, \xi)_{H_0^1(\Om;w)}\df t=0
	\end{equation*}
	by Definition \ref{03.11.D1} (i). 
	Since $\pt_tz(0)=\xi(s)=0$, integrating by parts, we get
	\begin{equation*}
		-\int_0^s (\pt_tz, \pt_t\xi)_{L^2(\Om)}\df t+\int_0^s (z,\xi)_{H_0^1(\Om;w)}\df t=0. 
	\end{equation*}
	Note that $\pt_t\xi=-z\ (0\leq t\leq s)$, and then 
	\begin{equation*}
		\int_0^s( \pt_tz,z)_{L^2(\Om)}\df t-\int_0^T(\pt_t\xi, \xi)_{L^2(\Om)}\df t=0. 
	\end{equation*}
	i.e., 
	\begin{equation*}
		\int_0^s\f{\df}{\df t}\left(\f{1}{2}\|z\|_{L^2(\Om)}^2-\f{1}{2}(\xi,\xi)_{H_0^1(\Om;w)}\right)\df t=0. 
	\end{equation*}
	Hence 
	\begin{equation*}
		\|z(s)\|_{L^2(\Om)}^2+\|\xi(0)\|_{H_0^1(\Om;w)}^2=0. 
	\end{equation*}
	This shows that $z=0$. 
	
	From this we get $z=y$. Together this with  \eqref{03.11.10}, we get the 1th-3th terms in \eqref{03.12.1}. 
	
	{\it Step 6}. Improved regularity. 
	
	Let $y^0\in D(\mcA), y^1\in H_0^1(\Om;w)$ and $f\in H^1(0,T; L^2(\Om))$. From \eqref{03.11.4} we get
	\begin{equation*}
		\f{\df^2}{\df t^2}\f{\df y_n^m}{\df t}+\la_n \f{\df y_n^m}{\df t}=\f{\df f_n^m}{\df t}, \mbox{ for } t\in [0,T], n=1,\cdots, m. 
	\end{equation*}
	Denote $\wt y^m=\pt_ty^m$, then this is 
	\begin{equation}\label{03.12.14}
		(\pt_{tt}\wt y^m, \Phi_n)_{L^2(\Om)}+(\wt y^m, \Phi_n)_{H_0^1(\Om;w)}=(\pt_tf^m, \Phi_n)_{L^2(\Om)}, \ n=1,\cdots, m. 
	\end{equation}
	Multiplying   $\f{\df y_n^m}{\df t}$, summing $n=1,\cdots, m$, we get
	\begin{equation*}
		\begin{split}
			(\pt_{tt}\wt y^m, \pt_t\wt y^m)_{L^2(\Om)}+(\wt y^m, \pt_t\wt y^m)_{H_0^1(\Om;w)}=(\pt_t f^m, \pt_t\wt y^m)_{L^2(\Om)}. 
		\end{split}
	\end{equation*}
	i.e., 
	\begin{equation*}
		\f{\df}{\df t}\left(\|\pt_t\wt y^m\|_{L^2(\Om)}^2+\|\wt y^m\|_{H_0^1(\Om;w)}^2\right)\leq \|\pt_tf^m\|_{L^2(\Om)}^2+\|\pt_t\wt y^m\|_{L^2(\Om)}^2, 
	\end{equation*}
	from the Gronwall's inequality, we get
	\begin{equation}\label{03.12.3}
		\begin{split}
			\|\pt_t\wt y^m\|_{L^2(\Om)}^2+\|\wt y^m\|_{H_0^1(\Om;w)}^2\leq e^t\left(\|\pt_t\wt y^m(0)\|_{L^2(\Om)}^2+\|\wt y^m(0)\|_{H_0^1(\Om;w)}^2+\int_0^t\|\pt_tf^m\|_{L^2(\Om)}^2\df t\right) 
		\end{split}
	\end{equation}
	for all $t\in [0,T]$. 
	
	On one side, from \eqref{03.11.5} and $\wt y^m(0)=\pt_ty^m(0)=\sum_{n=1}^m (y^1,\Phi_n)_{L^2(\Om)}\Phi_n$ and Lemma \ref{06.28.L1}, we obtain 
	\begin{equation}\label{03.12.4}
		\begin{split}
			\|\wt y^m(0)\|_{H_0^1(\Om;w)}^2
			&=\sum_{n=1}^m (y^1,\Phi_n)_{L^2(\Om)}^2\la_n\leq \sum_{n=1}^\iy (y^1,\Phi_n)_{L^2(\Om)}^2\la_n=\|y^1\|_{H_0^1(\Om;w)}^2. 
		\end{split}
	\end{equation}
	On the other side, from \eqref{03.11.5} and \eqref{03.11.6},  we get 
	\begin{equation*}
		\begin{split}
			\pt_t\wt y^m(0)
			&=\pt_{tt}y^m(0)=f^m(0)-\mcA y^m(0)=\sum_{n=1}^m f_n^m(0)\Phi_n-\sum_{n=1}^m y_n^m(0)\Phi_n\la_n\\
			&=\sum_{n=1}^m f_n^m(0)\Phi_n-\sum_{n=1}^m (y^0, \Phi_n)_{L^2(\Om)}\Phi_n\la_n, 
		\end{split}
	\end{equation*}
	and then 
	\begin{equation*}
		\begin{split}
			\|\pt_t\wt y^m(0)\|_{L^2(\Om)}^2
			&\leq 2\sum_{n=1}^m |f_n^m(0)|^2+2\sum_{n=1}^m (y^0,\Phi_n)_{L^2(\Om)}^2\la_n^2\leq C\|f(0)\|_{L^2(\Om)}^2+2\|\mcA y^0\|_{L^2(\Om)}^2
		\end{split}
	\end{equation*}
	by Lemma \ref{06.28.L1}. Together with this and \cite[Theorem 2 (iii) in Chapter 5.9.2, p.~302]{Evans}, we get
	\begin{equation}\label{03.12.5}
		\begin{split}
			\|\pt_t\wt y^m(0)\|_{L^2(\Om)}^2
			&\leq C\|f\|_{H^1(0,T; L^2(\Om))}^2+2\|\mcA y^0\|_{L^2(\Om)}^2, 
		\end{split}
	\end{equation}
	where the positive constant $C$ depending only on $T$. Combining \eqref{03.12.3} and \eqref{03.12.4} and \eqref{03.12.5}, we get
	\begin{equation}\label{03.12.6}
		\begin{split}
			\esssup_{t\in [0,T]}\left(\|\pt_t\wt y^m\|_{L^2(\Om)}^2+\|\wt y^m\|_{H_0^1(\Om;w)}^2\right)\leq C\left(\|y^0\|_{D(\mcA)}^2+\|y^1\|_{H_0^1(\Om;w)}^2+\|f\|_{H^1(0,T; L^2(\Om))}^2\right), 
		\end{split}
	\end{equation}
	where the positive constant $C$ depending only on $T$. Together \eqref{03.12.6} and \eqref{03.11.9}, we get
	\begin{equation*}
		\begin{split}
			\esssup_{t\in [0,T]}\left(\|\pt_{tt}y\|_{L^2(\Om)}^2+\|\pt_{t}y\|_{H_0^1(\Om;w)}^2\right)\leq C\left(\|y^0\|_{D(\mcA)}^2+\|y^1\|_{H_0^1(\Om;w)}^2+\|f\|_{H^1(0,T; L^2(\Om))}^2\right),
		\end{split}
	\end{equation*}
	where the positive constant $C$ depending only on $T$. This proves the third and the  fourth terms in \eqref{03.12.2}. 
	
	Multiplying \eqref{03.11.6} by $\la_ny_n^m$, summing $n=1,\cdots, m$, we get
	\begin{equation*}
		(\pt_{tt}y^m,\mcA y^m)_{L^2(\Om)}+ (\mcA y^m, \mcA y^m)_{L^2(\Om)}=(f^m,\mcA y^m)_{L^2(\Om)},  
	\end{equation*}
	note that $\mcA y^m\in H_0^1(\Om;w)$ by \eqref{03.11.2}, then, from 
	\begin{equation*}
		(\mcA y^m, \mcA y^m)_{L^2(\Om)}=-(\pt_{tt}y^m, \mcA y^m)_{L^2(\Om)}+(f^m, \mcA y^m)_{L^2(\Om)},
	\end{equation*}
	we get
	\begin{equation*}
		\|\mcA y^m\|_{L^2(\Om)}\leq \|f^m\|_{L^2(\Om)}+\|\pt_{tt}y^m\|_{L^2(\Om)}. 
	\end{equation*}
	Together with this and \eqref{03.12.6}, we get 
	\begin{equation}\label{03.12.7}
		\begin{split}
			\|\mcA y^m\|_{L^2(\Om)}\leq C\left(\|y^0\|_{D(\mcA)}^2+\|y^1\|_{H_0^1(\Om;w)}^2+\|f\|_{H^1(0,T; L^2(\Om))}^2\right), 
		\end{split}
	\end{equation}
	where the positive constant $C$ depending only on $T$. 
	
	Choosing $\zeta=\zeta(x_2)\in C_0^\iy(\Om), 0\leq \zeta\leq 1$ such that 
	\begin{equation*}
		\begin{split}
			\zeta=1 \mbox{ on } (-1,1)\ts\left(\f{1}{8},1\right), \quad \zeta=0 \mbox{ on } (-1,1)\ts \left(0,\f{1}{16}\right), \quad |\pt_{x_2}\zeta|\leq C, |\pt_{x_2x_2}\zeta|\leq C, 
		\end{split}
	\end{equation*}
	where the positive constants $C$ are absolute. Then $\xi=\zeta y^m$ is the solution of the following uniformly elliptic equation 
	\begin{equation*}
		\begin{cases}
			-\Div(A\nabla \xi)=\zeta f^m-\zeta \pt_{tt}y^m-2\nabla\zeta \cdot A\nabla y^m-y^m\Div(A\nabla\zeta), &\mbox{in }\Om_\f{1}{32},\\
			\xi=0, &\mbox{on } \pt\Om_\f{1}{32},
		\end{cases}
	\end{equation*}
	and then, from \cite[Theorem 1 (p.~327) in Chapter 6.3.1 and Theorem 4 (p.~334) in Chapter 6.3.2]{Evans} and \eqref{03.11.7} and \eqref{03.12.6}, we get
	\begin{equation}\label{03.12.8}
		\begin{split}
			\|y^m\|_{H^2(\Om_\f{1}{8})}
			&\leq \|\xi\|_{H^2(\Om_\f{1}{16})}\leq C\left(\|f^m\|_{L^2(\Om)}+\|\pt_{tt}y^m\|_{L^2(\Om)}+\|y^m\|_{H_0^1(\Om )}\right)\\
			&\leq C\left(\|y^0\|_{D(\mcA)}+\|y^1\|_{H_0^1(\Om;w)}+\|f\|_{H^1(0,T; L^2(\Om))}\right),  
		\end{split}
	\end{equation} 
	where the positive constant $C$ depending only on $\al$ and $T$. Combining \eqref{03.11.9} and \eqref{03.12.7} and \eqref{03.12.8}, we get the first and the second terms in \eqref{03.12.2}. 
	
	{\it Step 7}. Hidden regularity. i.e., we shall prove the  fourth term in \eqref{03.12.1}. 
	
	By the density argument, we only need to prove the case $y^0\in D(\mcA)$ and $y^1\in H_0^1(\Om;w)$ and $f\in H^1(0,T; L^2(\Om))$.

	Let $\de\in (0,\f{1}{16})$. Let $\zeta_2=\zeta_2(x_2)\in C_0^\iy(\R), 0\leq \zeta_2\leq 1$ such that 
	\begin{equation*}
		\zeta_2=1 \mbox{ on } \left(4\de, +\iy\right), \quad \zeta_2=0 \mbox{ on } (-\iy, 2\de), \quad |\zeta_2'|\leq C\de^{-1}, |\zeta_2''|\leq C\de^{-2}, 
	\end{equation*}
	where the positive constants $C$ are absolute. Taking $\zeta_1=\zeta(x_1)\in C_0^\iy(\R), 0\leq \zeta_1 \leq 1$ such that 
	\begin{equation*}
		\zeta_1=1 \mbox{ on }(-\iy, 0), \quad \zeta_1=0 \mbox{ on } \left(\f{1}{4}, 1\right),\quad |\zeta_1'|\leq C, |\zeta_1''|\leq C, 
	\end{equation*}
	where the positive constants $C$ are absolute. It is clear that $\zeta_1\zeta_2\pt_{x_1}y\in H^1(\Om_\de)$ and $\zeta_1\zeta_2\pt_{x_1}y=0$ on $[(-1,1)\ts (0,\de)]\cup [(\f{1}{2},1)\ts (-1,1)]$ by \eqref{03.12.2} (or Step 6). Multiply $\zeta_1\zeta_2\pt_{x_1}y$ on the both sides of \eqref{01.10.1}, then 
	\begin{equation*}
		\begin{split} 
		A_1+A_2
		&\equiv \iint_Q (\pt_{tt}y)\zeta_1\zeta_2\pt_{x_1}y\df x\df t-\iint_Q [\Div(A\nabla y)]\zeta_1\zeta_2\pt_{x_1}y\df x\df t\\
		&=\iint_Q f\zeta_1\zeta_2\pt_{x_1}y\df x\df t\equiv A_3. 
		\end{split} 
	\end{equation*}
	Since we have ``cut-off" the domain $[(-1,1)\ts (0,\de)]\cup [(\f{1}{2},1)\ts (-1,1)]$, the following integration by parts is feasible. 
	
	Note that from $\pt_ty=0$ on $\pt\Om$, we get 
	\begin{equation}\label{03.12.9}
		\begin{split}
			A_1
			&= \int_\Om (\pt_ty)(\zeta_1\zeta_2\pt_{x_1}y)\df x\bigg|_{t=0}^{t=T}-\f{1}{2}\iint_Q \zeta_1\zeta_2\pt_{x_1}(\pt_ty)^2\df x\df t\\
			&=\int_\Om (\pt_ty)(\zeta_1\zeta_2\pt_{x_1}y)\df x\bigg|_{t=0}^{t=T}+\f{1}{2}\iint_Q \zeta_1'\zeta_2(\pt_ty)^2\df x\df t. 
		\end{split}
	\end{equation}
	
	From $y=0$ on $(-1,1)\ts \{1\}$, then $\pt_{x_1}y=0$ on $(-1,1)\ts \{1\}\ts (0,T)$, and hence  $\zeta_1\zeta_2\pt_{x_1}y=0$ on $(\pt\Om-[\{-1\}\ts (0,1)])\ts (0,T)$. From $y=0$ on $\{-1\}\ts (0,1)$, then $\pt_{x_2}y=0$ on $\{-1\}\ts (0,1)\ts (0,T)$.  Therefore, 
	\begin{equation}\label{03.12.10}
		\begin{split}
			A_2
			&=\f{1}{2}\iint_{[\{-1\}\ts (0,1)]\ts (0,T)}\zeta_1\zeta_2(\pt_{x_1}y)^2\df S\df t\\
			&\hspace{4.5mm}+\iint_{Q} \zeta_1'\zeta_2(\pt_{x_1}y)^2\df x\df t+\iint_Q \zeta_1\zeta_2'x_2^\al (\pt_{x_2}y)\pt_{x_1}y\df x\df t\\
			&\hspace{4.5mm}-\f{1}{2}\iint_Q \zeta_1'\zeta_2(\pt_{x_1}y)^2\df x\df t-\f{1}{2}\iint_Q \zeta_1'\zeta_2x_2^\al (\pt_{x_2}y)^2\df x\df t. 
		\end{split}
	\end{equation}
	
	It is obvious that 
	\begin{equation}\label{03.12.11}
		\begin{split}
			A_3
			&\leq \f{1}{2}\iint_Q \zeta_1\zeta_2f^2\df x\df t+\f{1}{2}\iint_Q\zeta_1\zeta_2(\pt_{x_1}y)^2\df x\df t.  
		\end{split}
	\end{equation}
	
	Combining \eqref{03.12.9}, \eqref{03.12.10} and \eqref{03.12.11}, we get
	\begin{equation*}
		\begin{split}
			&\f{1}{2}\iint_{[\{-1\}\ts (0,1)]\ts (0,T)}\zeta_1\zeta_2(\pt_{x_1}y)^2\df S\df t\\
			&\leq \f{1}{2}\|f\|_{L^2(Q)}^2+C\iint_Q \left[(\pt_ty)^2+  \nabla y\cdot A\nabla y\right]\df x\df t+C\esssup_{t\in[0,T]}\left(\|\pt_ty\|_{L^2(\Om)}^2+\|y\|_{H_0^1(\Om;w)}^2\right)\\
			&\leq C\left(\|y^0\|_{H_0^1(\Om;w)}^2+\|y^1\|_{L^2(\Om)}^2+\|f\|_{L^2(Q)}^2\right) 
		\end{split}
	\end{equation*}
	by \eqref{03.12.1}.
	This implies 
	\begin{equation}\label{03.12.12}
		\begin{split}
			\iint_{[\{-1\}\ts (4\de,1)]\ts (0,T)}(\pt_{x_1}y)^2\df S\df t\leq C\left(\|y^0\|_{H_0^1(\Om;w)}^2+\|y^1\|_{L^2(\Om)}^2+\|f\|_{L^2(Q)}^2\right), 
		\end{split}
	\end{equation}
	where the positive constants $C$ depending only on $\al, T$ and $\de$. Similar to this case, we get 
	\begin{equation}\label{03.16.1}
		\begin{split}
			\iint_{[\{1\}\ts (4\de,1)]\ts (0,T)}(\pt_{x_1}y)^2\df S\df t
			&\leq C\left(\|y^0\|_{H_0^1(\Om;w)}^2+\|y^1\|_{L^2(\Om)}^2+\|f\|_{L^2(Q)}^2\right),\\
			\iint_{[(-1,1)\ts\{1\}]\ts (0,T)}(\pt_{x_2}y)^2\df S\df t
			&\leq C\left(\|y^0\|_{H_0^1(\Om;w)}^2+\|y^1\|_{L^2(\Om)}^2+\|f\|_{L^2(Q)}^2\right), 
		\end{split}
	\end{equation}
	where the positive constants $C$ depending only on $\al, T$ and $\de$. Together with these  and  \eqref{03.12.12}, we get the fourth term in \eqref{03.12.1}. 
	
	{\it Step 8}. Hidden regularity. i.e., we shall prove the fifth term in \eqref{03.12.2}. 
	
	It is clear that $h=\pt_ty$ is the weak solution of the following system 
	\begin{equation}\label{03.12.13}
		\begin{cases}
			\pt_{tt} h-\Div(A\nabla h)=\pt_tf, &\mbox{in }Q, \\
			h=0, &\mbox{on }\pt Q, \\
			h(0)=y^1, \pt_{t}h(0)=f(0)-\mcA y^0, &\mbox{in }\Om. 
		\end{cases}
	\end{equation}
	Indeed, from \eqref{03.12.14} and \eqref{03.11.5} and \eqref{03.11.9}, we get it. Then, from Step 7 (i.e., the fourth term in \eqref{03.12.1}), we get the fifth term in \eqref{03.12.2}.   We complete the proof of this theorem.  
\end{proof}

\begin{remark}\label{03.14.R1}
	Let $\de\in (0,\f{1}{8})$. In the proof of \eqref{03.12.8}, if we replace $\zeta\in C_0^\iy(\Om), 0\leq \zeta\leq 1$ such that 
	\begin{equation*}
		\zeta=1 \mbox{ on } (-1,1)\ts (2\de,1),\quad \zeta=0 \mbox{ on } (-1,1)\ts (-\iy, \de), \quad |\pt_{x_2}\zeta|\leq C\de^{-1}, |\pt_{x_2x_2}\zeta|\leq C\de^{-2}, 
	\end{equation*}
	then we have the following estimate
	\begin{equation*}
		\esssup_{t\in[0,T]}\|y\|_{H^2(\Om_\de)}\leq C_\de\left(\|y^0\|_{D(\mcA)}+\|y^1\|_{H_0^1(\Om;w)}+\|f\|_{H^1(0,T; L^2(\Om))}\right), 
	\end{equation*}
	where the positive constant $C_\de$ depending only on $\al$ and $T$ and $\de$. 
\end{remark}

\begin{proposition}\label{03.13.P1}
	Let $y^0\in H_0^1(\Om;w)$ and $y^1\in L^2(\Om)$ and $f\in L^2(Q)$. Then the function 
	\begin{equation*}
		y\in L^2(0,T; H_0^1(\Om;w))\cap H^1(0,T; L^2(\Om))\cap H^2(0,T; H^{-1}(\Om;w))
	\end{equation*}
	is a weak solution of \eqref{01.10.1} if and only if the function 
	\begin{equation*}
		y\in L^2(0,T; H_0^1(\Om;w))\cap H^1(0,T; L^2(\Om))
	\end{equation*}
	satisfies 
	\begin{equation}\label{03.13.1}
		\begin{split}
			&\int_0^T (y, \pt_{tt}\psi)_{L^2(\Om)}\df t+\int_0^T(y, \psi)_{H_0^1(\Om;w)}\df t\\
			&=\int_0^T(f,\psi)_{L^2(\Om)}\df t-(y^0, \pt_t\psi(0))_{L^2(\Om)}+(y^1,\psi(0))_{L^2(\Om)}, 
		\end{split}
	\end{equation}
	where $\psi\in C^\iy(\ol Q)$ with $\supp\psi(t)\s \Om$ for all $t\in [0,T]$ and $\psi(T)=\pt_t\psi(T)=0$. 
\end{proposition}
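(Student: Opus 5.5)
The plan is to prove the two implications separately. The forward direction is essentially the computation already made in Step 4 of the proof of Theorem \ref{03.11.T1}. The reverse direction recovers the distributional equation, the regularity $\pt_{tt}y\in L^2(0,T;H^{-1}(\Om;w))$, and the initial conditions from \eqref{03.13.1}, using carefully chosen test functions.

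\emph{Necessity.} Let $y$ be a weak solution and let $\psi$ be an admissible test function as in the statement. Since $\supp\psi(t)\s\Om$, we have $\psi(t)\in C_0^\iy(\Om)\s H_0^1(\Om;w)$. We take $v=\psi(t)$ in Definition \ref{03.11.D1}(i) and integrate over $(0,T)$. Then we integrate by parts twice in $t$ in the term $\int_0^T\lg\pt_{tt}y,\psi\rg\df t$. This is legitimate because $y\in H^1(0,T;L^2(\Om))$ and $\pt_ty\in H^1(0,T;H^{-1}(\Om;w))$, so $y\in C([0,T];L^2(\Om))$ and $\pt_ty\in C([0,T];H^{-1}(\Om;w))$, as in \eqref{03.11.11}. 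The boundary terms at $t=T$ vanish because $\psi(T)=\pt_t\psi(T)=0$. At $t=0$ we insert $y(0)=y^0$ and $\pt_ty(0)=y^1$, and this gives \eqref{03.13.1}.

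\emph{Sufficiency.} Assume $y\in L^2(0,T;H_0^1(\Om;w))\cap H^1(0,T;L^2(\Om))$ satisfies \eqref{03.13.1}.

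\textbf{Step 1: the distributional equation.} Take $\psi(x,t)=\theta(t)\vp(x)$ with $\theta\in C_0^\iy(0,T)$ and $\vp\in C_0^\iy(\Om)$. Then \eqref{03.13.1} says that, in $\mathcal D'(0,T)$,
\[
\frac{\df^2}{\df t^2}(y,\vp)_{L^2(\Om)}=(f,\vp)_{L^2(\Om)}-(y,\vp)_{H_0^1(\Om;w)}.
\]
The right-hand side defines, for a.e.\ $t$, a functional $g(t)=f(t)-\mcA y(t)$. Its action on $v$ is $(f,v)-(y,v)_{H_0^1(\Om;w)}$, and it satisfies $\|g(t)\|_{H^{-1}(\Om;w)}\le C\|f(t)\|_{L^2}+\|y(t)\|_{H_0^1(\Om;w)}$ by Remark \ref{03.11.R1}. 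Hence $g\in L^2(0,T;H^{-1}(\Om;w))$.

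\textbf{Step 2: extending to all test functions.} Since $C_0^\iy(\Om)$ is dense in $H_0^1(\Om;w)$ by definition, the vector-valued distribution $\pt_{tt}y$ equals $g$ in $H^{-1}(\Om;w)$. Therefore $y\in H^2(0,T;H^{-1}(\Om;w))$ and Definition \ref{03.11.D1}(i) holds for a.e.\ $t$ and every $v\in H_0^1(\Om;w)$.

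\textbf{Step 3: the initial data.} For general admissible $\psi$, we integrate by parts in time in the now-valid identity (i), exactly as in the necessity part. Subtracting the result from \eqref{03.13.1} gives
\[
(y(0)-y^0,\pt_t\psi(0))_{L^2(\Om)}-\lg \pt_ty(0)-y^1,\psi(0)\rg=0 .
\]
To separate the two conditions, choose $\psi=\theta(t)\vp(x)$ with $\theta(0)=0$, $\theta'(0)=1$, $\theta\equiv0$ near $T$. This yields $(y(0)-y^0,\vp)=0$ for all $\vp\in C_0^\iy(\Om)$, hence $y(0)=y^0$. Next choose $\theta(0)=1$, $\theta'(0)=0$. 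This yields $\lg\pt_ty(0)-y^1,\vp\rg=0$, and density of $C_0^\iy(\Om)$ in $H_0^1(\Om;w)$ gives $\pt_ty(0)=y^1$ in $H^{-1}(\Om;w)$.

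\textbf{Main obstacle.} The delicate point is Step 2: identifying the time-distributional second derivative with an $L^2(0,T;H^{-1}(\Om;w))$ function. It needs care because the test functions only have compact spatial support, whereas the target is the dual of the weighted space, whose norm degenerates near $\Gamma_2^0$. I will handle it by working with the scalar functions $t\mapsto(y(t),\vp)$ for each fixed $\vp$. The uniform bound on $g$ in $H^{-1}(\Om;w)$ from Remark \ref{03.11.R1}, combined with density of $C_0^\iy(\Om)$ in $H_0^1(\Om;w)$, then shows that the resulting functional extends uniquely. With this in hand, the trace identities at $t=0$ in Step 3 are routine.
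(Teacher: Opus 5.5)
Your proof is correct, and for the initial data it takes a different route from the paper.

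\textbf{Where you agree with the paper.} The necessity direction is the same. So is the recovery of $\partial_{tt}y=f-\mathcal{A}y\in L^2(0,T;H^{-1}(\Omega;w))$ from test functions with compact support in $Q$. The paper reaches the a.e.-$t$ form of (i) through time cut-offs and Lebesgue points for each $v\in C_0^\infty(\Omega)$. You read it off directly from the equality in $L^2(0,T;H^{-1}(\Omega;w))$, which also avoids the $v$-dependence of the exceptional null set.

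\textbf{How the paper gets the initial data.} It expands $y=\sum_n y_n\Phi_n$ in the eigenbasis and tests with $\psi=\zeta\Phi_n$. This gives the scalar problems $y_n''+\lambda_n y_n=f_n$ with $y_n(0)=y_n^0$, $y_n'(0)=y_n^1$. It then recognizes $\sum_{n\le m}y_n\Phi_n$ as the Galerkin approximations from the proof of Theorem~\ref{03.11.T1}. Hence $y$ equals the weak solution constructed there, whose initial values are already known.

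\textbf{How you get them.} You integrate (i) by parts against an admissible $\psi$ and compare with \eqref{03.13.1}. You then separate the two conditions by taking $\theta(0)=0,\ \theta'(0)=1$ and then $\theta(0)=1,\ \theta'(0)=0$.

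\textbf{What each route buys.} Your argument is self-contained. It uses only $y\in C([0,T];L^2(\Omega))$, $\partial_t y\in C([0,T];H^{-1}(\Omega;w))$ and the density of $C_0^\infty(\Omega)$ in $H_0^1(\Omega;w)$. It needs neither the spectral decomposition of $\mathcal{A}$ nor the Galerkin and existence machinery, and it carries over verbatim to Proposition~\ref{03.14.P1}. It also spells out the step the paper calls ``obvious'': obtaining $y_n(0)=y_n^0$ and $y_n'(0)=y_n^1$ from the weak scalar identity needs exactly your choice of $\theta(0),\theta'(0)$. In return, the paper's route identifies $y$ with the Galerkin limit directly. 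You would get that identification afterwards from the uniqueness part of Theorem~\ref{03.11.T1}.
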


\begin{proof}
	We prove the necessity. 
	
	Let $y$ be the weak solution of \eqref{01.10.1} with respect to $(y^0,y^1,f)$. Then 
	\begin{equation}\label{03.13.2}
		y\in C([0,T]; L^2(\Om)), \quad \pt_ty\in C([0,T]; H^{-1}(\Om;w)). 
	\end{equation}
	Let $\psi\in C^\iy(\ol Q)$ with $\supp \psi(t)\s \Om$ for all $t\in [0,T]$ and $\psi(T)=\pt_t\psi(T)=0$. Then, from Definition \ref{03.11.D1} (i), we get
	\begin{equation*}
		\begin{split}
			\lg\pt_{tt}y,\psi(t)\rg_{H^{-1}(\Om;w), H_0^1(\Om;w)}+(y, \psi(t))_{H_0^1(\Om;w)}=(f,\psi(t))_{L^2(\Om)}. 
		\end{split}
	\end{equation*} 
	Integrating on $[0,T]$, we obtain 
	\begin{equation*}
		\begin{split}
			\int_0^T\lg \pt_{tt}y,\psi\rg_{H^{-1}(\Om;w), H_0^1(\Om;w)}\df t+\int_0^T(y,\psi)_{H_0^1(\Om;w)}\df t=\int_0^T(f,\psi)_{L^2(\Om)}\df t. 
		\end{split}
	\end{equation*}
	Integration by parts with respect to $t\in [0,T]$, then 
	\begin{equation*}
		\begin{split}
			&\int_0^T(y,\pt_{tt}\psi)_{L^2(\Om)}\df t+\int_0^T(y,\psi)_{H_0^1(\Om;w)}\df t\\
			&=\int_0^T(f,\psi)_{L^2(\Om)}\df t-(y^0,\pt_t\psi(0))_{L^2(\Om)}+\lg y^1,\psi(T)\rg_{H^{-1}(\Om;w), H_0^1(\Om;w)}\\
			&=\int_0^T(f,\psi)_{L^2(\Om)}\df t-(y^0,\pt_t\psi(0))_{L^2(\Om)}+(y^1,   \psi(0))_{L^2(\Om)}. 
		\end{split}
	\end{equation*}
	This shows \eqref{03.13.1}. 
	
	We prove sufficiency. 
	
	Note that 
	\begin{equation*}
		\begin{split} 
		\mcA: L^2(0,T; H_0^1(\Om;w))\ra L^2(0,T; H^{-1}(\Om;w))
		\end{split} 
	\end{equation*}
	is defined by 
	\begin{equation*}
		\lg \mcA z, v\rg_{L^2(0,T; H^{-1}(\Om;w)), L^2(0,T; H_0^1(\Om;w))}=\int_0^T(z,v)_{H_0^1(\Om;w)}\df t, 
	\end{equation*}
	then 
	\begin{equation}\label{03.13.3}
		\|\mcA z\|_{L^2(0,T; H^{-1}(\Om;w))}\leq \|z\|_{L^2(0,T; H_0^1(\Om;w))}. 
	\end{equation}

	Let $\psi\in C_0^\iy(Q)$. Then, from \eqref{03.13.1}, we get
	\begin{equation*}
		\begin{split}
			\int_0^T(y,\pt_{tt}\psi)_{L^2(\Om)}\df t+\int_0^T(y,\psi)_{H_0^1(\Om;w)}\df t=\int_0^T(f,\psi)_{L^2(\Om)}\df t. 
		\end{split}
	\end{equation*}
	Together with this and $f\in L^2(Q)\s L^2(0,T; H^{-1}(\Om;w))$  and  \eqref{03.13.3}, we get
	\begin{equation*}
		\begin{split}
			\pt_{tt}y=f-\mcA y \mbox{ in the sense of distribution}, 
		\end{split}
	\end{equation*} 
	and $\pt_{tt}y\in L^2(0,T; H^{-1}(\Om;w))$. 
	
	Let $t\in (0,T)$. Set $\e\in (0,\f{1}{2}\min \{t, T-t\})$, and $\de\in (0,\f{1}{2}\e)$. Choosing $\zeta\in C^\iy(\ol\R), 0\leq \zeta\leq 1$ such that 
	\begin{equation*}
		\zeta=1 \mbox{ on } (t-\e,t+\e), \quad \zeta=0 \mbox{ on }(-\iy, t-\e-\de)\cup (t+\e+\de,+\iy). 
	\end{equation*}
	For each $v\in C_0^\iy(\Om)$, we have $\zeta v\in C_0^\iy(Q)$. And from \eqref{03.13.1} we obtain 
	\begin{equation*}
		\begin{split}
			\int_0^T\zeta \lg \pt_{tt}y, v\rg_{H^{-1}(\Om;w), H_0^1(\Om;w)}\df t+\int_0^T\zeta  (y, v)_{H_0^1(\Om;w)}\df t=\int_0^T\zeta (f,v)_{L^2(\Om)}\df t. 
		\end{split}
	\end{equation*}
	Letting $\de\ra 0^+$, we get
	\begin{equation*}
		\int_{t-\e}^{t+\e} \lg \pt_{tt}y, v\rg_{H^{-1}(\Om;w), H_0^1(\Om;w)}\df t+\int_{t-\e}^{t+\e}  (y, v)_{H_0^1(\Om;w)}\df t=\int_{t-\e}^{t+\e}  (f,v)_{L^2(\Om)}\df t. 
	\end{equation*}
	From the Lebesgue point theorem, letting $\e\ra 0^+$, we get
	\begin{equation*}
		\begin{split}
			\lg\pt_{tt}y, v\rg_{H^{-1}(\Om;w), H_0^1(\Om;w)}+(y,v)_{H_0^1(\Om;w)}=(f,v)_{L^2(\Om)}
		\end{split}
	\end{equation*}
	for a.e.~$t\in [0,T]$ and all $v\in C_0^\iy(\Om)$. This proves Definition \ref{03.11.D1} (i). 
	
	Let $v\in C_0^\iy(\Om)$ and $\zeta\in C^\iy(\ol\R)$ such that $\zeta(T)=\pt_t\zeta(T)=0$, taking $\psi=\zeta v$, then, from \eqref{03.13.1}, we get
	\begin{equation*}
		\begin{split}
			\int_0^T(y,v)_{L^2(\Om)}\zeta''\df t+\int_0^T (y,v)_{H_0^1(\Om;w)}\zeta\df t=\int_0^T(f,v)_{L^2(\Om)}\zeta\df t-(y^0,v)_{L^2(\Om)}\zeta'(0)+(y^1,v)_{L^2(\Om)}\zeta(0). 
		\end{split}
	\end{equation*}
	Denote $y_n=(y,\Phi_n)_{L^2(\Om)}$ for all $n\in\N$, then $y=\sum\limits_{n=1}^\iy y_n\Phi_n$.   By density, it suffices to take  $v=  \Phi_n\ (n\in\N)$, then 
	\begin{equation*}
		\int_0^Ty_n\zeta''\df t+\la_n\int_0^T y_n\zeta \df t=\int_0^T f_n\zeta\df t-y_n^0\zeta'(0)+y_n^1\zeta(0), 
	\end{equation*}
	where $f_n=(f,\Phi_n)_{L^2(\Om)}, y_n^0=(y^0,\Phi_n)_{L^2(\Om)}$ and $y_n^1=(y^1,\Phi_n)_{L^2(\Om)}$. It is obviously that $y_n$ is the solution of the following ordinary differential equation 
	\begin{equation*}
		\begin{cases} 
		y_n''+\la_n y_n=f_n, &\mbox{for }t\in [0,T], \\
		y_n(t)=y_n^0, y_n'(t)=y_n^1, &\mbox{for } t=0.  
		\end{cases} 
	\end{equation*}
	Note that $y^m=\sum_{n=1}^m y_n\Phi_n$ is defined in \eqref{03.11.4} and \eqref{03.11.5}, from the proof of Theorem \ref{03.11.T1} (Steps 1-3), we know that $y^m\ra \wt y$ weakly in $L^2(0,T; H_0^1(\Om;w))$ as $m\ra\iy$, where $\wt y$ is the weak solution of \eqref{01.10.1} with respect to $(y^0,y^1,f)$.  It is clearly that $y^m\ra y$ strongly in $L^2(0,T; H_0^1(\Om;w))$, then $\wt y=y$. Moreover, we have $y(0)=\wt y(0)=y^0$ and $\pt_ty(0)=\pt_t\wt y(0)=y^1$. This shows Definition \ref{03.11.D1} (ii). We complete the proof of this proposition. 
\end{proof}

\section{Shape design}\label{S3}

Let $\de\in (0,\f{1}{4})$. Denote 
\begin{equation*}
	w_\de=w|_{\Om_\de},\quad A_\de=A|_{\Om_\de}, \mbox{ and } \mcA_\de u=-\Div(A_\de\nabla u) \mbox{ on }\Om_\de. 
\end{equation*}

Now, we define
\begin{equation*}
	H^1(\Om_\de;w_\de)=\left\{u\in L^2(\Om_\de)\colon \int_{\Om_\de} \nabla u\cdot A_\de\nabla u\df x<+\iy \right\}, 
\end{equation*}
its inner product and norm are defined by 
\begin{equation*}
	(u,v)_{H^1(\Om_\de;w_\de)}=\int_{\Om_\de}\left(uv+\nabla u\cdot A_\de\nabla v\right)\df x,\quad \|u\|_{H^1(\Om_\de;w_\de)}=(u,u)_{H^1(\Om_\de;w_\de)}^\f{1}{2}. 
\end{equation*}
Denote 
\begin{equation*}
	H_0^1(\Om_\de;w_\de)=\mbox{the closure of } C_0^\iy(\Om_\de) \mbox{ in } H^1(\Om_\de;w_\de), 
\end{equation*}
and $H^{-1}(\Om_\de;w_\de)$ the dual space of $H_0^1(\Om_\de;w_\de)$ with pivot $L^2(\Om_\de)$. 

Define
\begin{equation*}
	H^2(\Om_\de;w_\de)=\left\{u\in H^1(\Om_\de;w_\de)\colon \mcA_\de u\in L^2(\Om_\de)\right\}, 
\end{equation*}
its inner product and norm are defined by 
\begin{equation*}
	(u,v)_{H^2(\Om_\de;w_\de)}=(u,v)_{H^1(\Om_\de;w_\de)}+\int_{\Om_\de}(\mcA_\de u)(\mcA_\de v)\df x, \quad \|u\|_{H^2(\Om_\de;w_\de)}=(u,u)_{H^2(\Om_\de;w_\de)}^\f{1}{2}. 
\end{equation*}

It is obviously that the spaces 
\begin{equation*}
	H_0^1(\Om_\de;w_\de),\quad H^1(\Om_\de;w_\de), \quad H^2(\Om_\de;w_\de) 
\end{equation*}
are Hilbert spaces, since $H_0^1(\Om_\de;w_\de)=H_0^1(\Om_\de), H^1(\Om_\de;w_\de)=H^1(\Om_\de)$, and $H^2(\Om_\de;w_\de)=H^2(\Om_\de)$ by the regularity of uniformly elliptic equation. 

\begin{lemma}\label{03.13.L1}
	Let $\al\in (0,1)$. For all $u\in H_0^1(\Om_\de;w_\de)$, we have
	\begin{equation*}
		\int_{\Om_\de} x_2^{\al-2}u^2\df x\leq \f{4}{(1-\al)^2}\int_{\Om_\de} x_2^\al (\pt_{x_2}u)^2\df x. 
	\end{equation*}
\end{lemma}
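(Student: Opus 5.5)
The plan is to reduce Lemma \ref{03.13.L1} to Lemma \ref{03.11.L1} by extension by zero. Given $u\in H_0^1(\Om_\de;w_\de)$, define $\tilde u$ on $\Om$ by $\tilde u=u$ on $\Om_\de$ and $\tilde u=0$ on $\Om\setminus\Om_\de$.

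The first step is to check that $\tilde u\in H_0^1(\Om;w)$. Since $C_0^\iy(\Om_\de)$ is dense in $H_0^1(\Om_\de;w_\de)$, pick $\vp_k\in C_0^\iy(\Om_\de)$ with $\vp_k\ra u$ in $H^1(\Om_\de;w_\de)$. Their zero extensions $\tilde\vp_k$ lie in $C_0^\iy(\Om)$. Moreover,
\begin{equation*}
\|\tilde\vp_k-\tilde\vp_l\|_{H^1(\Om;w)}=\|\vp_k-\vp_l\|_{H^1(\Om_\de;w_\de)},
\end{equation*}
so $\{\tilde\vp_k\}$ is Cauchy in $H^1(\Om;w)$. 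Its limit agrees with $\tilde u$ in $L^2(\Om)$, hence $\tilde u\in H_0^1(\Om;w)$. For the same reason $\nabla\tilde u=\nabla u$ on $\Om_\de$ and $\nabla\tilde u=0$ elsewhere.

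The second step applies Lemma \ref{03.11.L1} to $\tilde u$:
\begin{equation*}
\int_{\Om_\de}x_2^{\al-2}u^2\df x=\int_\Om x_2^{\al-2}\tilde u^2\df x\leq \f{4}{(1-\al)^2}\int_\Om x_2^\al(\pt_{x_2}\tilde u)^2\df x=\f{4}{(1-\al)^2}\int_{\Om_\de}x_2^\al(\pt_{x_2}u)^2\df x.
\end{equation*}

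An alternative is to redo the Hardy-type computation of Lemma \ref{03.11.L1} directly on $(\de,1)$, writing $u(x_1,x_2)=\int_\de^{x_2}\pt_{x_2}u(x_1,s)\df s$ and using $\int_\de^{x_2}s^{-\be}\df s\le\int_0^{x_2}s^{-\be}\df s$. This gives the same constant, since the remaining integrals over $(\de,1)$ are dominated by those over $(0,1)$.

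The only point that needs care is the first step: the zero extension stays in the weighted closure, and its weak gradient has no singular part on $\Ga_2^\de$. Both follow from the density of $C_0^\iy(\Om_\de)$, so I do not expect a genuine obstacle.
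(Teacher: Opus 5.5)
Your proposal is correct and follows the paper's own proof, which notes that $H_0^1(\Om_\de;w_\de)\subset H_0^1(\Om;w)$ (via extension by zero) and then applies Lemma \ref{03.11.L1}. Your density argument supplies the details of that inclusion that the paper leaves implicit, including that the weak gradient of the zero extension has no singular part on $\Ga_2^\de$.
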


\begin{proof}
	The proof is the same as that of Lemma \ref{03.11.L1}, since $H_0^1(\Omega_\delta;w_\delta) \subset H_0^1(\Omega;w)$.
\end{proof}

\begin{remark}\label{03.13.R1}
	Let $\al\in (0,1)$. Then for all $u\in H_0^1(\Om_\de;w_\de)$ we have 
	\begin{equation}\label{03.13.5}
		\int_{\Om_\de} u^2\df x=\int_{\Om_\de} x_2^{2-\al}x_2^{\al-2}u^2\df x\leq \f{4}{(1-\al)^2}\int_{\Om_\de} x_2^\al (\pt_{x_2}u)^2\df x. 
	\end{equation}
	Hence the inner product and the norm are defined by 
	\begin{equation*}
		(u,v)_{H_0^1(\Om_\de;w_\de)}=\int_{\Om_\de} A_\de\nabla u\cdot \nabla u\df x,\quad \|u\|_{H_0^1(\Om_\de;w_\de)}=(u,u)_{H^1(\Om_\de;w_\de)}^\f{1}{2}
	\end{equation*}
	are equivalent inner product and norm on $H_0^1(\Om_\de;w_\de)$, respectively. 
\end{remark}

\begin{lemma}\label{03.13.L2}
	The embedding $H_0^1(\Om_\de;w_\de)\hra L^2(\Om_\de)$ is compact. 
\end{lemma}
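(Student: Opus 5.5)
The plan is to reduce Lemma \ref{03.13.L2} to the classical Rellich--Kondrachov theorem, because the truncated domain is uniformly elliptic. On $\Om_\de$ we have $x_2\in(\de,1)$, so
\begin{equation*}
	\de^\al\int_{\Om_\de}|\nabla u|^2\df x\leq \int_{\Om_\de} A_\de\nabla u\cdot\nabla u\df x\leq \int_{\Om_\de}|\nabla u|^2\df x .
\end{equation*}
Combined with Remark \ref{03.13.R1}, which makes $\|\cdot\|_{H_0^1(\Om_\de;w_\de)}$ equivalent to the full $H^1(\Om_\de;w_\de)$-norm, this gives an equivalence between $\|\cdot\|_{H_0^1(\Om_\de;w_\de)}$ and $\|\cdot\|_{H_0^1(\Om_\de)}$, with constants depending only on $\al$ and $\de$. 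Both spaces are closures of $C_0^\iy(\Om_\de)$ under equivalent norms. Hence $H_0^1(\Om_\de;w_\de)=H_0^1(\Om_\de)$ as sets, with equivalent topologies, as already noted before Lemma \ref{03.13.L1}.

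Next, take a bounded sequence $\{u_n\}$ in $H_0^1(\Om_\de;w_\de)$, say $\|u_n\|_{H_0^1(\Om_\de;w_\de)}\leq C_0$. By the equivalence, $\|u_n\|_{H_0^1(\Om_\de)}\leq C_0\de^{-\al/2}$. The domain $\Om_\de=(-1,1)\ts(\de,1)$ is a bounded Lipschitz rectangle, so the Rellich--Kondrachov theorem (or simply compactness of $H_0^1\hra L^2$ on bounded domains, via extension by zero) yields a subsequence converging strongly in $L^2(\Om_\de)$. This proves the compactness of the embedding.

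Alternatively, one could mimic the proof of Lemma \ref{03.11.L2}. Extend each $u_n$ by zero to obtain an element of $H_0^1(\Om;w)$ with the same norm, apply Lemma \ref{03.11.L2}, and restrict back to $\Om_\de$. I would mention this route as a remark, since it also makes the compactness uniform in spirit with respect to $\de$.

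There is no substantial obstacle. The only point needing care is the norm equivalence, in particular the lower bound $x_2^\al\geq\de^\al>0$ on $\Om_\de$. The constant in that equivalence degenerates as $\de\to0^+$, but this is harmless because compactness is a qualitative statement for each fixed $\de$.
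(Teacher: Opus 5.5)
Your proof is correct and follows the paper's route. The paper just asserts $H_0^1(\Om_\de;w_\de)=H_0^1(\Om_\de)$ and invokes the classical compact embedding $H_0^1(\Om_\de)\hra L^2(\Om_\de)$; your bound $\de^\al|\nabla u|^2\leq A_\de\nabla u\cdot\nabla u\leq|\nabla u|^2$ on $\Om_\de$ supplies the norm equivalence that the paper leaves implicit.
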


\begin{proof}
	From the embedding  $H_0^1(\Om_\de;w_\de)=H_0^1(\Om_\de)\hra L^2(\Om_\de)$ is compact, we get this lemma. 
\end{proof}

From Remark \ref{03.13.R1}, Lemma \ref{03.13.L2}  and Lemma 3.1 of \cite{YangGuo1}, we obtain that there exists a discrete spectrum for the partial differential operator $\mcA_\de$: 
\begin{equation*}
	0< \la_1^\de<\la_2^\de\leq \la_3^\de\leq \cdots \ra +\iy. 
\end{equation*}
Note that 
\begin{equation*}
	\f{(1-\al)^2}{4}\leq \la_1^\de=\inf_{0\neq u\in H_0^1(\Om_\de;w_\de)}\f{\int_{\Om_\de} A_\de\nabla u\cdot \nabla u \df x}{\int_{\Om_\de} u^2\df x}. 
\end{equation*}

\begin{notation}\label{03.13.N1}
	Let $\Phi_n^\de\ (n\in\N)$ be the eigenfunction of $\mcA_\de$ with respect to the eigenvalue $\la_n^\de$. i.e., 
	\begin{equation}\label{03.13.6}
		\begin{cases}
			\mcA_\de\Phi_n^\de=\la_n^\de\Phi_n^\de, &\mbox{in }\Om_\de,\\
			\Phi_n^\de=0, &\mbox{on }\pt\Om_\de. 
		\end{cases}
	\end{equation}
	Moreover, set $\|\Phi_n^\de\|_{L^2(\Om_\de)}=1$ for all $n\in\N$, then  $\{\Phi_n^\de\}_{n\in\N}$ is a orthonormal basis of $L^2(\Om_\de)$, and  $\{\Phi_n^\de\}_{n\in\N}$ is the orthonormal basis of $L^2(\Om_\de)$, moreover, $\{\Phi_n^\de\}_{n\in\N}$ is an orthogonal subset of $H_0^1(\Om_\de;w_\de)$. (See \cite[Theorem 7 (pp. 728) in Appendix D]{Evans}, or the proof of Lemma \ref{03.13.L3} in the following.) 
\end{notation}

\begin{lemma}\label{03.13.L3}
	Let $u=\sum\limits_{i=1}^\iy u_i \Phi_i^\de\in H_0^1(\Om_\de;w_\de)$ with $u_i=(u,\Phi_i^\de)_{L^2(\Om_\de)}$ for all $i\in\N$. We have $\nabla u=\sum\limits_{i=1}^\iy u_i\nabla \Phi_i^\de$ and $\|u\|_{H_0^1(\Om_\de;w_\de)}=(\sum\limits_{i=1}^\iy u_i^2\la_i^\de)^\f{1}{2}$, and
	\begin{equation*}
		u\in H^2(\Om_\de;w_\de)\Lra \sum_{i=1}^\iy u_i^2(\la_i^\de)^2<\iy,
	\end{equation*}
	and
	\begin{equation*}
		\mcA_\de u=\sum_{i=1}^\iy u_i\la_i^\de\Phi_i^\de \mbox{ and } \|\mcA_\de u\|_{L^2(\Om_\de)}=\left(\sum_{i=1}^\iy u_i^2(\la_i^\de)^2\right)^\f{1}{2}.
	\end{equation*}
\end{lemma}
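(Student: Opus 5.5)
The plan is to reproduce, on $\Om_\de$, the argument of Lemma \ref{06.28.L1} verbatim, with $(\Om,w,\mcA,\la_i,\Phi_i)$ replaced by $(\Om_\de,w_\de,\mcA_\de,\la_i^\de,\Phi_i^\de)$. Only the weak eigenvalue relation \eqref{03.13.6}, the $L^2$-orthonormality of $\{\Phi_i^\de\}$, and the equivalence of norms from Remark \ref{03.13.R1} are needed.

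\textbf{Step 1: orthonormal basis of $H_0^1(\Om_\de;w_\de)$.} Testing \eqref{03.13.6} for $\Phi_k^\de$ against $\Phi_l^\de$ gives
\begin{equation*}
\int_{\Om_\de}\nabla\Phi_k^\de\cdot A_\de\nabla\Phi_l^\de\,\df x=\de_{kl}\la_k^\de .
\end{equation*}
Hence $\{(\la_k^\de)^{-1/2}\Phi_k^\de\}$ is orthonormal in $H_0^1(\Om_\de;w_\de)$. For completeness, suppose $u$ is orthogonal in $H_0^1(\Om_\de;w_\de)$ to every $\Phi_k^\de$. The weak form yields $0=(u,\Phi_k^\de)_{H_0^1(\Om_\de;w_\de)}=\la_k^\de(u,\Phi_k^\de)_{L^2(\Om_\de)}$, so every $L^2$-coefficient of $u$ vanishes and $u=0$. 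Expanding $u$ in this basis, the coefficients are
\begin{equation*}
e_i=(\la_i^\de)^{-1/2}(u,\Phi_i^\de)_{H_0^1(\Om_\de;w_\de)}=(\la_i^\de)^{1/2}u_i .
\end{equation*}
Therefore $u=\sum u_i\Phi_i^\de$ converges in $H_0^1(\Om_\de;w_\de)$, which gives $\nabla u=\sum u_i\nabla\Phi_i^\de$ in the weighted $L^2$ sense. Parseval then gives $\|u\|_{H_0^1(\Om_\de;w_\de)}^2=\sum u_i^2\la_i^\de$.

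\textbf{Step 2: necessity of $\sum u_i^2(\la_i^\de)^2<\iy$.} Let $u\in H^2(\Om_\de;w_\de)$ and set $\vp_n=\sum_{i\le n}u_i\Phi_i^\de$. Since $\Phi_i^\de$ vanishes on $\pt\Om_\de$ and $u\in H_0^1(\Om_\de;w_\de)$, the symmetry identity $(\mcA_\de u,\Phi_i^\de)=(u,\mcA_\de\Phi_i^\de)=\la_i^\de u_i$ follows from the weak formulation with no boundary terms. This gives
\begin{equation*}
(\mcA_\de u,\mcA_\de\vp_n)_{L^2(\Om_\de)}=\sum_{i\le n}u_i^2(\la_i^\de)^2=\|\mcA_\de\vp_n\|_{L^2(\Om_\de)}^2 .
\end{equation*}
By Cauchy--Schwarz, $\sum_{i\le n}u_i^2(\la_i^\de)^2\le\|\mcA_\de u\|^2_{L^2(\Om_\de)}$ for every $n$, and we let $n\to\iy$.

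\textbf{Step 3: sufficiency and the identification of $\mcA_\de u$.} Assume $\sum u_i^2(\la_i^\de)^2<\iy$. Then $g:=\sum u_i\la_i^\de\Phi_i^\de$ converges in $L^2(\Om_\de)$. For $\vp\in C_0^\iy(\Om_\de)$, using the $H_0^1(\Om_\de;w_\de)$-convergence from Step 1,
\begin{equation*}
\lg\mcA_\de u,\vp\rg=\int_{\Om_\de}\nabla u\cdot A_\de\nabla\vp\,\df x=\sum_i u_i\la_i^\de(\Phi_i^\de,\vp)_{L^2(\Om_\de)}=(g,\vp)_{L^2(\Om_\de)} .
\end{equation*}
So $\mcA_\de u=g\in L^2(\Om_\de)$, i.e. $u\in H^2(\Om_\de;w_\de)$. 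Parseval gives $\|\mcA_\de u\|_{L^2(\Om_\de)}^2=\sum u_i^2(\la_i^\de)^2$ with equality, which sharpens Step 2.

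\textbf{Main obstacle.} The only delicate point is justifying the termwise interchanges: passing the sum through the weighted gradient in Step 3, and the symmetry identity in Step 2. I will handle both purely through the convergence established in Step 1, not through pointwise arguments. On $\Om_\de$ this is even simpler than in Lemma \ref{06.28.L1}, because $A_\de$ is uniformly elliptic ($x_2^\al\ge\de^\al$) and $H_0^1(\Om_\de;w_\de)=H_0^1(\Om_\de)$. As a cross-check, one may also invoke classical spectral theory for the uniformly elliptic operator $\mcA_\de$.
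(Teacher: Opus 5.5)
Your proposal is correct and follows the paper's route exactly. The paper proves this lemma only by pointing to the proof of Lemma \ref{06.28.L1}, and you transplant that argument to $\Om_\de$: the orthonormal basis $\{(\la_k^\de)^{-1/2}\Phi_k^\de\}$, Cauchy--Schwarz against $\mcA_\de\vp_n$ for necessity, and distributional identification of $\mcA_\de u$ for sufficiency. Your closing Parseval step also correctly states the norm identity as an equality, whereas the written proof of Lemma \ref{06.28.L1} ends with a misprinted inequality.
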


\begin{proof}
	The proof follows from the same argument as in the proof of Lemma \ref{06.28.L1}.
\end{proof}

We consider the following equation 
\begin{equation}\label{03.13.4}
	\begin{cases}
		\pt_{tt}y_\de-\Div(A_\de\nabla y_\de)=f_\de, &\mbox{in }Q_\de,\\
		y_\de=0, &\mbox{on }\pt Q_\de,\\
		y_\de(0)=y_\de^0,\pt_ty_\de(0)=y_\de^1, &\mbox{in }\Om_\de, 
	\end{cases}
\end{equation}
where $y_\de^0\in H_0^1(\Om_\de;w_\de), y_\de^1\in L^2(\Om_\de)$ and $f_\de\in L^2(Q_\de)$. 

\begin{definition}\label{03.13.D1}
	Let $y_\de^0\in H_0^1(\Om_\de;w_\de), y_\de^1\in L^2(\Om_\de)$ and $f_\de\in L^2(Q_\de)$. We call 
	\begin{equation*}
		y_\de\in L^2(0,T; H_0^1(\Om_\de;w_\de))\cap H^1(0,T; L^2(\Om_\de))\cap H^2(0,T; H^{-1}(\Om_\de;w_\de))
	\end{equation*}
	is a weak solution of  \eqref{03.13.4} with respect to $(y_\de^0,y_\de^1,f_\de)$, provided 
	
	(i) for each $v\in H_0^1(\Om_\de;w_\de)$ and a.e.~$t\in [0,T]$, we have
	\begin{equation*}
		\lg\pt_{tt}y_\de, v\rg_{H^{-1}(\Om_\de;w_\de), H_0^1(\Om_\de;w_\de)}+(y_\de, v)_{H_0^1(\Om_\de;w_\de)}=(f_\de,v)_{L^2(\Om_\de)}, 
	\end{equation*}
	
	(ii) $y_\de(0)=y_\de^0$ and $\pt_ty_\de(0)=y_\de^1$. 
\end{definition}

\begin{theorem}\label{03.13.T1}
	Let $y_\de^0\in H_0^1(\Om_\de;w_\de)$, $y_\de^1\in L^2(\Om_\de)$ and $f_\de\in L^2(Q_\de)$. Then there exists a unique weak solution 
	\begin{equation*}
		y_\de\in L^2(0,T; H_0^1(\Om_\de;w_\de))\cap H^1(0,T; L^2(\Om_\de))\cap H^2(0,T; H^{-1}(\Om_\de;w_\de))
	\end{equation*}
	for the equation \eqref{01.10.1} with respect to $(y_\de^0,y_\de^1, f_\de)$, and 
	\begin{equation}\label{03.13.7}
		\begin{split}
			&\esssup_{t\in [0,T]}\left(\|y_\de(t)\|_{H_0^1(\Om_\de;w_\de)}+\|\pt_ty_\de\|_{L^2(\Om_\de)}\right)+\|\pt_{tt}y_\de\|_{L^2(0,T; H^{-1}(\Om_\de;w_\de))}\\
			&\hspace{4.5mm}+\left\|\f{\pt y_\de}{\pt \nu}\right\|_{L^2(0,T; L^2(\Ga_\de))}\\
			&\leq C\left(\|y_\de^0\|_{H_0^1(\Om_\de;w_\de)}+\|y_\de^1\|_{L^2(\Om_\de)}+\|f\|_{L^2(Q_\de)}\right), 
		\end{split}
	\end{equation}
	where the positive constant $C$ depending only on $\al$ and $T$. 
	
	Furthermore, if in addition $y_\de^0\in D(\mcA_\de)$, and $y_\de^1\in H_0^1(\Om_\de;w_\de)$ and $f_\de\in H^1(0,T; L^2(\Om_\de))$, then 
	\begin{equation}\label{03.13.8}
		\begin{split}
			&\esssup_{t\in [0,T]}\left(\|y_\de\|_{H^2(\Om_\f{1}{8})}+\|y_\de\|_{D(\mcA_\de)}+\|\pt_ty_\de\|_{H_0^1(\Om_\de;w_\de)}+\|\pt_{tt}y_\de\|_{L^2(\Om_\de)}\right)\\
			&\hspace{4.5mm}+\left\|\f{\pt (\pt_ty_\de)}{\pt \nu}\right\|_{L^2(0,T; L^2(\Ga_\de))} \\
			&\leq C\left(\|y_\de^0\|_{D(\mcA_\de)}+\|y_\de^1\|_{H_0^1(\Om_\de;w_\de)}+\|f\|_{H^1(0,T; L^2(\Om_\de))}\right), 
		\end{split}
	\end{equation}
	where the positive constant $C$ depending only on $\al$ and $T$. 
\end{theorem}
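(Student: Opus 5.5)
The plan is to repeat the proof of Theorem \ref{03.11.T1} on $\Om_\de$, using the spectral data $\{(\la_n^\de,\Phi_n^\de)\}$ of Notation \ref{03.13.N1} and Lemma \ref{03.13.L3} in place of $\{(\la_n,\Phi_n)\}$. The only genuinely new point is that all constants must be independent of $\de$. Concretely, I will write the Galerkin approximations $y_\de^m=\sum_{n\le m}y_{n}^{m,\de}(t)\Phi_n^\de$. Each coefficient solves $\ddot y_n+\la_n^\de y_n=f_n^\de$, with data given by the $L^2(\Om_\de)$-projections of $y_\de^0$, $y_\de^1$ and $f_\de$.

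The energy identity and Gronwall give
\[
\esssup_t\bigl(\|\pt_ty_\de^m\|_{L^2(\Om_\de)}^2+\|y_\de^m\|_{H_0^1(\Om_\de;w_\de)}^2\bigr)\le e^T\bigl(\|y_\de^0\|^2_{H_0^1(\Om_\de;w_\de)}+\|y_\de^1\|^2_{L^2(\Om_\de)}+\|f_\de\|^2_{L^2(Q_\de)}\bigr),
\]
with constant $e^T$. The $H^{-1}(\Om_\de;w_\de)$ bound on $\pt_{tt}y_\de^m$ uses the Poincaré constant $2/(1-\al)$ from Remark \ref{03.13.R1}, which is $\de$-uniform. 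Weak-star limits, the passage to the limit in the test identity, the identification of the initial data, and uniqueness via the $\xi(t)=\int_t^s y\,d\tau$ trick go through verbatim. For the second part I differentiate the Galerkin ODEs in time, exactly as in Step 6. This bounds $\pt_ty_\de$ in $H_0^1(\Om_\de;w_\de)$ and $\pt_{tt}y_\de$ in $L^2(\Om_\de)$, and then bounds $\mcA_\de y_\de=f_\de-\pt_{tt}y_\de$ in $L^2$; all constants depend only on $T$. The interior bound $\|y_\de\|_{H^2(\Om_{1/8})}$ follows from the cutoff $\zeta(x_2)$ supported in $x_2>1/16$. Since $\de<1/16$ can be assumed, this cutoff is $\de$-independent, and on its support the operator is uniformly elliptic with constants $(1/16)^\al$. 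So Evans's interior/boundary $H^2$ estimates on $\Om_{1/32}$ give $\de$-free constants. For $\de\in[1/16,1/4)$ the domain is uniformly nondegenerate, so the classical theory with $\de$-independent constants suffices.

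The main obstacle is the hidden-regularity term, and here it is on all of $\Ga_\de$, down to the corners at $x_2=\de$. Step 7 of Theorem \ref{03.11.T1} only gave constants depending on $\de$, because of the cutoff $\zeta_2$ near $x_2=0$. The plan is to exploit the fact that on $\Om_\de$ the solution (after density, with smooth data) is $H^2$ up to the boundary $\Ga_2^\de$, where $y_\de=0$. I will therefore drop $\zeta_2$ entirely and use the multiplier $\zeta_1(x_1)\pt_{x_1}y_\de$ for the lateral side $\{-1\}\ts(\de,1)$, with $\zeta_1$ as in Step 7. On the bottom edge $x_2=\de$ we have $\pt_{x_1}y_\de=0$, so the boundary term there vanishes. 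The conormal flux term $x_2^\al\pt_{x_2}y_\de\,\zeta_1\pt_{x_1}y_\de$ also vanishes on the top and bottom, and no $\de^{-1}$ derivative of a cutoff appears. The remaining terms are bounded by $\|\zeta_1'\|_\iy$ times the energy, plus the $f$ term, giving a $\de$-uniform bound for $\int\!\!\int(\pt_{x_1}y_\de)^2$ on the vertical sides.

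For the top side $x_2=1$ I will use the multiplier $\eta(x_2)\pt_{x_2}y_\de$, with $\eta=1$ near $x_2=1$ and $\eta=0$ for $x_2<1/2$. This keeps the multiplier away from the degeneracy, so the estimate is again $\de$-free; it gives the bound for $x_2^\al(\pt_{x_2}y_\de)^2$ on $x_2=1$, where $x_2^\al=1$. Near the bottom corners the $x_1$-multiplier cannot be cut off. I expect the delicate point to be justifying the integration by parts there, since $H^2$ regularity of $y_\de$ in a rectangle's corners is fine for convex domains. I will rely on that convexity, approximating with smooth data first and concluding by density.

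Finally, the bound for $\pt(\pt_ty_\de)/\pt\nu$ follows by applying the first estimate to $h=\pt_ty_\de$. This $h$ solves \eqref{03.13.4} with data $(y_\de^1,\ f_\de(0)-\mcA_\de y_\de^0,\ \pt_tf_\de)$, and the first estimate applies as in Step 8.
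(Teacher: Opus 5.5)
Your proposal is correct and follows essentially the paper's route: rerun the Galerkin/energy argument of Theorem~\ref{03.11.T1} with Remark~\ref{03.13.R1} and Lemma~\ref{03.13.L3} so that all constants are $\de$-free, and obtain the trace bound on $\Ga_\de$ from the multiplier $\zeta_1(x_1)\pt_{x_1}y_\de$ with no $x_2$-cutoff, using $\pt_{x_1}y_\de=0$ on $x_2=\de,1$ and $\pt_{x_2}y_\de=0$ on $x_1=\pm1$ (your $\eta(x_2)\pt_{x_2}y_\de$ multiplier writes out the top side that the paper treats ``by the same argument''). You also justify the integrations by parts at the corners via $H^2$ regularity on the convex rectangle plus density, which the paper leaves implicit but which does not change the argument.
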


\begin{proof}
	The proof of this theorem is similar to that of Theorem \ref{03.11.T1}.
	
	More precisely, \eqref{03.13.7} and \eqref{03.13.8} correspond to \eqref{03.12.1} and \eqref{03.12.2}, respectively. To derive \eqref{03.12.1} and \eqref{03.12.2}, we used the estimates \eqref{03.11.7}, \eqref{03.11.8}, \eqref{03.11.10}, \eqref{03.12.3}, \eqref{03.12.6}, \eqref{03.12.8}, and \eqref{03.12.12}. These estimates rely on Gronwall’s inequality, Remark \ref{03.11.R1}, Lemma \ref{06.28.L1}, \cite[Theorem 2 (iii), Chapter 5.9.2, p.~302]{Evans}, and \cite[Theorem 1, Chapter 6.3.1, p.~327; Theorem 4, Chapter 6.3.2, p.~334]{Evans}.
	
	By replacing Remark \ref{03.11.R1} with Remark \ref{03.13.R1} and Lemma \ref{06.28.L1} with Lemma \ref{03.13.L3}, while noting that the other arguments are independent of $\delta \in (0,\tfrac{1}{4})$, we obtain the corresponding results \eqref{03.13.7} and \eqref{03.13.8}.
	
	We now give a more detailed computation of the fourth term in \eqref{03.13.7}. 
	
	Choosing $\zeta_1=\zeta_1(x_1)\in C_0^\iy(\R), 0\leq \zeta_1\leq 1$ such that 
	\begin{equation*}
		\zeta_1=1 \mbox{ on } \left(-\iy, -\f{1}{2}\right), \quad \zeta_1=0 \mbox{ on } (0,+\iy), \quad |\zeta_1'|\leq C, |\zeta_1''|\leq C, 
	\end{equation*}
	where the positive constants $C$ are absolute. 
	
	Multiplying $\zeta_1\pt_{x_1}y_\de$ on the both sides of \eqref{03.13.4}, integrating on $Q_\de$,  we get
	\begin{equation}\label{03.13.9}
		\begin{split}
			A_1+A_2
			&\equiv \iint_{Q_\de} (\pt_{tt}y_\de)(\zeta_1\pt_{x_1}y_\de)\df x\df t-\iint_{Q_\de} [\Div(A_\de\nabla y_\de)](\zeta_1\pt_{x_1}y_\de)\df x\df t\\
			&=\iint_{Q_\de} f_\de (\zeta_1\pt_{x_1}y_\de)\df x\df t\equiv A_3. 
		\end{split}
	\end{equation}
	
	From $\pt_ty_\de=0$ on $\pt\Om_\de$, then 
	\begin{equation}\label{03.13.10}
		\begin{split}
			A_1
			&=\int_{\Om_\de}\zeta_1(\pt_ty_\de)(\pt_{x_1}y_\de)\df x\bigg|_{t=0}^{t=T}+\f{1}{2}\iint_{Q_\de}\zeta_1' (\pt_ty_\de)^2\df x\df t. 
		\end{split}
	\end{equation}
	From $\pt_{x_1}y_\de=0$ on $\pt\Om_\de\cap \{x\in\R^2\colon x_2=\de,1\}$ and $\pt_{x_2}y_\de=0$ on $\pt\Om_\de\cap\{x\in\R^2\colon x_1=-1\}$, we get
	\begin{equation}\label{03.13.11}
		\begin{split}
			A_2
			&=\f{1}{2}\iint_{[\{-1\}\ts (0,1)]\ts (0,T)}(\pt_{x_1}y_\de)^2\df S\df t\\
			&\hspace{4.5mm}+\f{1}{2}\iint_{Q_\de}\zeta_1'(\pt_{x_1}y_\de)^2\df x\df t-\f{1}{2}\iint_{Q_\de}\zeta_1'x_2^\al (\pt_{x_2}y_\de)^2\df x\df t. 
		\end{split}
	\end{equation}
	And we have 
	\begin{equation}\label{03.13.12}
		\begin{split}
			|A_3|\leq \f{1}{2}\iint_{Q_\de} \zeta_1f_\de^2\df x\df t+\f{1}{2}\iint_{Q_\de} \zeta_1(\pt_{x_1}y_\de)^2\df x\df t. 
		\end{split}
	\end{equation}
	Combining \eqref{03.13.9}-\eqref{03.13.12}, we get
	\begin{equation}\label{03.13.13}
		\begin{split}
			&\iint_{[\{-1\}\ts (0,1)]\ts (0,T)}(\pt_{x_1}y_\de)^2\df S\df t\\
			&\leq \esssup_{t\in[0,T]}\left(\|\pt_ty_\de\|_{L^2(\Om)}^2+\|y_\de\|_{H_0^1(\Om;w)}^2\right)+C\iint_{Q_\de}\left[(\pt_ty_\de)^2+A\nabla y_\de\cdot \nabla y_\de\right)\df x\df t+\|f_\de\|_{L^2(Q)}^2\\
			&\leq C\left(\|y_\de^0\|_{H_0^1(\Om_\de;w_\de)}^2+\|y_\de^1\|_{L^2(\Om_\de)}^2+\|f_\de\|_{L^2(Q_\de)}^2\right)
		\end{split}
	\end{equation}
	by the first and the second terms in \eqref{03.13.7}, where the positive constant $C$ depending  only on $\al$ and $T$. 
	
	By the same argument as \eqref{03.13.13}, we get
	\begin{equation}\label{03.13.14}
		\begin{split}
			\iint_{[\{1\}\ts (0,1)]\ts (0,T)}(\pt_{x_1}y_\de)^2\df S\df t
			&\leq C\left(\|y_\de^0\|_{H_0^1(\Om_\de;w_\de)}^2+\|y_\de^1\|_{L^2(\Om_\de)}^2+\|f_\de\|_{L^2(Q_\de)}^2\right),\\
			\iint_{[(-1,1)\ts \{1\}]\ts (0,T)}(\pt_{x_2}y_\de)^2\df S\df t
			&\leq C\left(\|y_\de^0\|_{H_0^1(\Om_\de;w_\de)}^2+\|y_\de^1\|_{L^2(\Om_\de)}^2+\|f_\de\|_{L^2(Q_\de)}^2\right), 
		\end{split}
	\end{equation}
	where the positive constant $C$ depending  only on $\al$ and $T$.  This complete the proof of this theorem. 
\end{proof}

\begin{proposition}\label{03.14.P1}
	Let $y_\de^0\in H_0^1(\Om_\de;w_\de)$ and $y_\de^1\in L^2(\Om_\de)$ and $f_\de\in L^2(Q_\de)$. Then the function 
	\begin{equation*}
		y_\de\in L^2(0,T; H_0^1(\Om_\de;w_\de))\cap H^1(0,T; L^2(\Om_\de))\cap H^2(0,T; H^{-1}(\Om_\de;w_\de))
	\end{equation*}
	is a weak solution of \eqref{03.13.4} if and only if the function 
	\begin{equation*}
		y_\de\in L^2(0,T; H_0^1(\Om_\de;w_\de))\cap H^1(0,T; L^2(\Om_\de))
	\end{equation*}
	satisfies 
	\begin{equation}\label{03.14.1}
		\begin{split}
			&\int_0^T (y_\de, \pt_{tt}\psi)_{L^2(\Om_\de)}\df t+\int_0^T(y_\de, \psi)_{H_0^1(\Om_\de;w_\de)}\df t\\
			&=\int_0^T(f_\de,\psi)_{L^2(\Om_\de)}\df t-(y_\de^0, \pt_t\psi(0))_{L^2(\Om_\de)}+(y_\de^1,\psi(0))_{L^2(\Om_\de)}, 
		\end{split}
	\end{equation}
	where $\psi\in C^\iy(\ol Q_\de)$ with $\supp\psi(t)\s \Om_\de$ for all $t\in [0,T]$ and $\psi(T)=\pt_t\psi(T)=0$. 
\end{proposition}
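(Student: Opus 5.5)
The argument runs parallel to that of Proposition \ref{03.13.P1}. We replace $\Om$, $A$, $\mcA$, $\{\Phi_n,\la_n\}$ by $\Om_\de$, $A_\de$, $\mcA_\de$, $\{\Phi_n^\de,\la_n^\de\}$. We use Theorem \ref{03.13.T1} in place of Theorem \ref{03.11.T1}, and Lemma \ref{03.13.L3} in place of Lemma \ref{06.28.L1}. We prove the two directions separately.

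\emph{Necessity.} Let $y_\de$ be a weak solution. The regularity $y_\de\in H^1(0,T;L^2(\Om_\de))\cap H^2(0,T;H^{-1}(\Om_\de;w_\de))$ gives $y_\de\in C([0,T];L^2(\Om_\de))$ and $\pt_ty_\de\in C([0,T];H^{-1}(\Om_\de;w_\de))$, so the initial conditions make sense. For an admissible $\psi$ we have $\psi(t)\in H_0^1(\Om_\de;w_\de)$ for each $t$. We test Definition \ref{03.13.D1} (i) with $v=\psi(t)$ and integrate over $[0,T]$. Integrating by parts twice in $t$, the terms at $t=T$ vanish because $\psi(T)=\pt_t\psi(T)=0$. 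The terms at $t=0$ produce $-(y_\de^0,\pt_t\psi(0))_{L^2(\Om_\de)}+(y_\de^1,\psi(0))_{L^2(\Om_\de)}$ by Definition \ref{03.13.D1} (ii), using that $y_\de^1\in L^2(\Om_\de)$ so the duality pairing is an $L^2$ pairing. This gives \eqref{03.14.1}.

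\emph{Sufficiency.} First take $\psi\in C_0^\iy(Q_\de)$ in \eqref{03.14.1}. This yields $\pt_{tt}y_\de=f_\de-\mcA_\de y_\de$ in the sense of distributions, where $\mcA_\de\colon L^2(0,T;H_0^1(\Om_\de;w_\de))\to L^2(0,T;H^{-1}(\Om_\de;w_\de))$ is bounded with norm at most $1$. Since $f_\de\in L^2(Q_\de)\subset L^2(0,T;H^{-1}(\Om_\de;w_\de))$, it follows that $\pt_{tt}y_\de\in L^2(0,T;H^{-1}(\Om_\de;w_\de))$.

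Next take $\psi=\zeta(t)v(x)$ with $v\in C_0^\iy(\Om_\de)$ and $\zeta$ a smooth cutoff approximating $\chi_{(t-\e,t+\e)}$. Letting the cutoff sharpen and then applying the Lebesgue differentiation theorem, we obtain Definition \ref{03.13.D1} (i) for a.e.\ $t$ and all $v\in C_0^\iy(\Om_\de)$. Density of $C_0^\iy(\Om_\de)$ in $H_0^1(\Om_\de;w_\de)$, which holds by definition of that space, extends this to all $v\in H_0^1(\Om_\de;w_\de)$.

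For the initial data, take $\psi=\zeta(t)\Phi_n^\de$ with $\zeta(T)=\zeta'(T)=0$. This is admissible after approximating $\Phi_n^\de$ by compactly supported smooth functions in $H_0^1$, or directly, since the pairing is continuous in $H_0^1$. Then $y_n:=(y_\de,\Phi_n^\de)_{L^2(\Om_\de)}$ is the weak, hence classical, solution of $y_n''+\la_n^\de y_n=f_n$ with $y_n(0)=y^0_{\de,n}$ and $y_n'(0)=y^1_{\de,n}$. These coefficients coincide with the Galerkin coefficients in the proof of Theorem \ref{03.13.T1}. Hence $y_\de$ agrees with the unique weak solution furnished by that theorem, and so satisfies (ii).

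\emph{Main obstacle.} The delicate point is this last identification. We need that $\sum_{n\le m}y_n\Phi_n^\de\to y_\de$ in $L^2(0,T;H_0^1(\Om_\de;w_\de))$. This follows from Lemma \ref{03.13.L3}, since $\{(\la_n^\de)^{-1/2}\Phi_n^\de\}$ is an orthonormal basis of $H_0^1(\Om_\de;w_\de)$, applied pointwise in $t$ together with dominated convergence. We also need that the Galerkin limit identifies the initial values. Alternatively, one can read off (ii) directly: for every $n$, $(y_\de(0),\Phi_n^\de)=y_n(0)=y^0_{\de,n}$ and $\langle\pt_ty_\de(0),\Phi_n^\de\rangle=y_n'(0)=y^1_{\de,n}$. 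Completeness of $\{\Phi_n^\de\}$ in $L^2(\Om_\de)$, and its density in $H_0^1(\Om_\de;w_\de)$, then give $y_\de(0)=y_\de^0$ and $\pt_ty_\de(0)=y_\de^1$.
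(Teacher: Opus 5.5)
Your proposal is correct and follows essentially the same route as the paper, whose proof simply transfers the proof of Proposition \ref{03.13.P1} to $\Om_\de$. That route is integration by parts in $t$ for necessity; for sufficiency, it is the distributional identity $\pt_{tt}y_\de=f_\de-\mcA_\de y_\de$, time-localization with the Lebesgue point theorem, and comparison of the eigenfunction coefficients with the Galerkin solution of Theorem \ref{03.13.T1} to recover the initial data. Your extra justifications fill in details the paper leaves implicit: the density argument in $v$, the convergence of the partial sums $\sum_{n\le m}y_n\Phi_n^\de\to y_\de$ in $L^2(0,T;H_0^1(\Om_\de;w_\de))$ via Lemma \ref{03.13.L3}, and the direct coefficient-wise verification of Definition \ref{03.13.D1} (ii).
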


\begin{proof}
	The proof of this proposition is the same as that of Proposition \ref{03.13.P1}.
\end{proof}

We now introduce the shape design method.

\begin{notation}\label{03.13.N2}
	Let $y_\de\in L^2(Q_\de)$. Extending $y_\de$ to 
	\begin{equation*}
		Ey_\de(x,t)=
		\begin{cases}
			y_\de(x,t), &\mbox{for } (x,t)\in Q_\de,\\
			0, &\mbox{for }(x,t)\in Q-Q_\de, 
		\end{cases}
	\end{equation*}
	then $Ey_\de\in L^2(Q)$ and $\|Ey_\de\|_{L^2(Q)}=\|y_\de\|_{L^2(Q_\de)}$. 
	
	If $y_\de\in L^2(0,T;H_0^1(\Om_\de;w_\de))$, then $Ey_\de\in L^2(0,T; H_0^1(\Om;w))$ and from 
	\begin{equation*}
		\nabla Ey_\de=
		\begin{cases}
			\nabla y_\de, &\mbox{on }Q_\de,\\
			0, &\mbox{on }Q-Q_\de, 
		\end{cases}
	\end{equation*}
	we get  
	\begin{equation*}
		\begin{split}
			\|Ey_\de\|_{L^2(0,T; H_0^1(\Om;w))}^2
			&=\iint_{Q}\nabla Ey_\de\cdot A\nabla Ey_\de\df x\df t\\
			&=\iint_{Q_\de}\nabla y_\de \cdot A_\de \nabla y_\de \df x\df t=\|y_\de\|_{L^2(0,T; H_0^1(\Om_\de;w_\de))}^2. 
		\end{split}
	\end{equation*}
	
	If $y_\de\in H^1(0,T; L^2(\Om_\de))$, then $Ey_\de\in H^1(0,T; L^2(\Om))$, and 
	\begin{equation*}
		\pt_tEy_\de=E\pt_ty_\de, \quad \|\pt_t Ey_\de\|_{L^2(Q)}=\|E\pt_ty_\de\|_{L^2(Q)}. 
	\end{equation*}
	Indeed, for each $\zeta\in C_0^\iy(0,T), v\in C_0^\iy(\Om)$, we have 
	\begin{equation*}
		\begin{split}
			\iint_Q(\pt_tEy_\de)v\zeta\df x \df t
			&=-\iint_Q (Ey_\de)v\pt_t\zeta\df x\df t=-\iint_{Q_\de} y_\de v \pt_t\zeta\df x\df t\\
			&=\iint_{Q_\de} (\pt_ty_\de)v\zeta\df x\df t=\iint_Q (E\pt_ty_\de)v\zeta \df x\df t, 
		\end{split}
	\end{equation*}
	then $\pt_tEy_\de=E\pt_ty_\de$ in the sense of distribution, and hence $\pt_tEy_\de=E\pt_ty_\de$ in $L^2(Q)$.
	
	If $y_\de\in H^2(0,T; L^2(\Om_\de))$, then $Ey_\de\in H^2(0,T; L^2(\Om))$, and $\|\pt_{tt}Ey_\de\|_{L^2(Q)}=\|E\pt_{tt}y_\de\|_{L^2(Q)}$.  
\end{notation}

\begin{theorem}\label{03.13.T2}
	Let $y^0\in C_0^\iy(\Om)$ and $y^1\in L^2(\Om)$ and $f\in L^2(Q)$.  Let $y$ be the weak  solution of \eqref{01.10.1} with respect to $(y^0,y^1,f)$, $y_\de$ be the weak solution of \eqref{03.13.4} with respect to $(y^0|_{\Om_\de}, y^1|_{\Om_\de},f|_{Q_\de})$ for $\de\in (0,\f{1}{4})$. Then, by abstract subsequence, we have
	\begin{equation}\label{03.13.15}
		\begin{split}
			Ey_\de
			&\ra y \hspace{3mm}\mbox{ weakly in } L^2(0,T; H_0^1(\Om;w)), \\
			\pt_t Ey_\de 
			&\ra \pt_t y \mbox{ weakly in } L^2(Q). 
		\end{split}
	\end{equation}
	And hence 
	\begin{equation}\label{03.13.16}
		Ey_\de\ra y\mbox{ strongly in } L^2(Q). 
	\end{equation}
	Moreover, if in addition $y^1\in C_0^\iy(\Om)$ and $f\in H^1(0,T; L^2(\Om))$, then for each fixed $\de\in (0,\f{1}{4})$ we have 
	\begin{equation}\label{03.13.17}
		\f{\pt Ey_\ga}{\pt \nu}\ra \f{\pt y}{\pt\nu} \mbox{ strongly in } L^2(0,T ; L^2(\Ga_\de)) \mbox{ as } \ga\ra 0^+, 
	\end{equation}
	and 
	\begin{equation}\label{03.14.9}
		\left\|\f{\pt y}{\pt \nu}\right\|_{L^2(0,T; L^2(\Ga))}\leq C\left(\|y^0\|_{H_0^1(\Om;w)}+\|y^1\|_{L^2(\Om)}+\|f\|_{L^2(Q)}\right), 
	\end{equation}
	where the positive constant $C$ depending only on $\al$ and $T$. 
\end{theorem}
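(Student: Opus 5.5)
The plan has three stages: uniform bounds in $\de$ from Theorem \ref{03.13.T1}, identification of weak limits through the integral characterization of Proposition \ref{03.13.P1}, and a multiplier identity on the fixed region $\Ga_\de$ that is independent of $\ga$.

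\emph{Uniform bounds and weak limits.} Since $y^0\in C_0^\iy(\Om)$, for $\de$ small $y^0|_{\Om_\de}\in C_0^\iy(\Om_\de)$. Moreover
\[
\|y^0|_{\Om_\de}\|_{H_0^1(\Om_\de;w_\de)}\le \|y^0\|_{H_0^1(\Om;w)},\qquad \|y^1|_{\Om_\de}\|_{L^2(\Om_\de)}\le\|y^1\|_{L^2(\Om)},\qquad \|f|_{Q_\de}\|_{L^2(Q_\de)}\le \|f\|_{L^2(Q)}.
\]
Estimate \eqref{03.13.7}, whose constant depends only on $\al,T$, together with Notation \ref{03.13.N2}, then bounds $Ey_\de$ in $L^2(0,T;H_0^1(\Om;w))\cap H^1(0,T;L^2(\Om))$ uniformly in $\de$. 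Extract a subsequence with $Ey_\de\rightharpoonup z$ and $\pt_tEy_\de\rightharpoonup\pt_tz$.

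To identify $z$, fix a test function $\psi$ as in Proposition \ref{03.13.P1}. Its support is compact in $\Om$ for all $t$, hence lies in $\Om_\de$ for $\de$ small, so it is admissible in \eqref{03.14.1}. Since $E$ preserves both the $L^2$ pairing and the weighted energy pairing, \eqref{03.14.1} rewrites on $Q$ with $Ey_\de$. Passing to the weak limit gives \eqref{03.13.1} for $z$, and Proposition \ref{03.13.P1} plus uniqueness (Theorem \ref{03.11.T1}) give $z=y$. The whole family therefore converges, which is \eqref{03.13.15}.

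For \eqref{03.13.16}, use compactness. $Ey_\de$ is bounded in $L^2(0,T;H_0^1(\Om;w))$ and $\pt_tEy_\de$ in $L^2(0,T;L^2(\Om))$. The embedding $H_0^1(\Om;w)\hra L^2(\Om)$ is compact (Lemma \ref{03.11.L2}), so the Aubin--Lions lemma gives strong convergence in $L^2(Q)$.

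\emph{Boundary convergence \eqref{03.13.17}.} Under the extra smoothness, fix $\de$ and take $\ga<\de/4$. I would apply the multiplier computation from Step 7 of Theorem \ref{03.11.T1} to the difference $e=y_\ga-y$ on $\Om_{\ga}$, with cutoffs $\zeta_1\zeta_2$ where $\zeta_2$ vanishes below $\de/2$ and equals $1$ above $\de$. This produces
\[
\int_0^T\!\!\int_{\Ga_\de}\Big|\f{\pt e}{\pt\nu}\Big|^2 \le C_\de\Big(\sup_t\|e\|^2_{H^1(\Om_{\de/2}\text{-strip})}+\sup_t\|\pt_te\|^2_{L^2}+\text{local }L^2\text{ terms}\Big).
\]
Everything on the right is localized to $\{x_2>\de/2\}$, where both equations are uniformly elliptic and $e$ solves a homogeneous wave equation.

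The main obstacle is upgrading the interior weak convergence to strong energy convergence there. I would first try the higher-order bounds: \eqref{03.13.8} and Remark \ref{03.14.R1}, applied on the truncated domains, give uniform $H^2(\Om_{\de/4})$ bounds on $y_\ga$ and $H^1$ bounds on $\pt_ty_\ga$, uniform in $\ga$. These require $y^0,y^1\in C_0^\iy$ and $f\in H^1(0,T;L^2)$, so that the data lie in $D(\mcA_\ga)$ uniformly. Aubin--Lions on $Q\cap\{x_2>\de/4\}$ then yields strong convergence of $y_\ga$ in $C([0,T];H^1)$ and of $\pt_ty_\ga$ in $C([0,T];L^2)$ locally. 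This also covers the endpoint terms at $t=0,T$ in the identity, and the right-hand side tends to $0$.

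\emph{Uniform bound \eqref{03.14.9}.} From the fourth term of \eqref{03.13.7},
\[
\|\pt_\nu Ey_\ga\|_{L^2(0,T;L^2(\Ga_\de))}\le\|\pt_\nu y_\ga\|_{L^2(0,T;L^2(\Ga_\ga))}\le C(\al,T)\,(\text{data}),
\]
uniformly in $\ga$ and $\de$. Letting $\ga\to0$ with \eqref{03.13.17} and then $\de\to0$ by monotone convergence gives \eqref{03.14.9} for smooth data. Density extends it to general $y^1\in L^2$ and $f\in L^2(Q)$, using the energy estimate \eqref{03.12.1} to pass the traces to the limit.
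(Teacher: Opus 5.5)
Your proposal is correct. For \eqref{03.13.15}, \eqref{03.13.16} and \eqref{03.14.9} it matches the paper's argument: uniform bounds from \eqref{03.13.7}, identification of the weak limit through Proposition~\ref{03.13.P1}, compactness, and then $\Ga_\de\s\Ga_\ga$ followed by $\de\ra 0^+$. The genuine difference is in how you obtain \eqref{03.13.17}.

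The paper puts the compactness on the boundary. From \eqref{03.13.8} and the local elliptic estimate it bounds $\pt_\nu y_\ga$ in $L^2(0,T;H^{1/2}(\Ga_\de))$. It bounds $\pt_t\pt_\nu y_\ga$ in $L^2(0,T;L^2(\Ga_\de))$ using the hidden-regularity estimate for $\pt_t y_\ga$, which is \eqref{03.14.5}. Aubin--Lions on $\Ga_\de$ then gives a strongly convergent subsequence with some limit $h_\de$. Only afterwards is $h_\de=\pt_\nu y$ identified, by passing to the limit in Green's formula against an extension $\wt\Psi$ of an arbitrary $\Psi\in C_0^\iy(\Ga_\de)$, using $\pt_{tt}Ey_\ga\rightharpoonup\pt_{tt}y$.

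You instead put the compactness in the interior strip $\{x_2>\de/4\}$ and obtain strong local energy convergence of $e=y_\ga-y$. The Step~7 multiplier identity for the homogeneous equation satisfied by $e$, with $e(0)=\pt_te(0)=0$, then transfers this to $\pt_\nu e$ on $\Ga_\de$. This route has three advantages:
\begin{itemize}
\item it bypasses the time-derivative trace bound \eqref{03.14.5};
\item it avoids the duality identification of $h_\de$;
\item it gives convergence of the whole family, with a quantitative bound on $\pt_\nu(y_\ga-y)$ in terms of the local energy of the difference.
\end{itemize}
The paper's route needs only weak interior convergence, but pays for this with the extra trace estimate and the identification step.

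You could even drop the multiplier. Once you have the uniform $L^\iy(0,T;H^2(\Om_{\de/4}))$ and $W^{1,\iy}(0,T;H^1(\Om_{\de/4}))$ bounds, compactness into $C([0,T];H^s(\Om_{\de/4}))$ for some $s\in(\f{3}{2},2)$ plus the trace theorem yields \eqref{03.13.17} directly.

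Finally, your closing density step is not needed for this theorem, since \eqref{03.14.9} is asserted only under the additional smoothness. That extension is exactly the content of Proposition~\ref{03.16.P1}.
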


\begin{proof}
	We prove this theorem by the following steps. 
	
	{\it Step 1}. Weakly convergence. 
	
	By abstract subsequence, we only need to show $\de\in (0,\de_0)$ with $\de_0=\min\{\f{1}{4}, \dist(\supp y^0, \pt\Om)\}$. Then $y^0\in C_0^\iy(\Om_\de)$. Hence, from \eqref{03.13.7} in  Theorem \ref{03.13.T1} and  Notation \ref{03.13.N2}, we get
	\begin{equation*}
		\begin{split}
			\esssup_{t\in[0,T]}\left(\|Ey_\de(t)\|_{H_0^1(\Om;w)}+\|\pt_tEy_\de\|_{L^2(\Om)}\right) 
			&\leq C\left(\|y^0\|_{H_0^1(\Om;w)}+\|y^1|_{\Om_\de}\|_{L^2(\Om_\de)}+\|f|_{Q_\de}\|_{L^2(Q_\de)}\right)\\
			&\leq C\left(\|y^0\|_{H_0^1(\Om;w)}+\|y^1\|_{L^2(\Om)}+\|f\|_{L^2(Q)}\right), 
		\end{split}
	\end{equation*}
	where the positive constants $C$ depending only on $\al$ and $T$. Then there exists a subsequence of $\{Ey_\de\}_{\de\in(0,\de_0)}$ and 
	\begin{equation}\label{03.14.3}
		z\in L^2(0,T; H_0^1(\Om;w))\cap H^1(0,T; L^2(\Om))
	\end{equation}
	such that 
	\begin{equation}\label{03.14.2}
		\begin{split}
			Ey_\de
			&\ra z \hspace{3mm}\mbox{ weakly in } L^2(0,T; H_0^1(\Om;w)), \\
			\pt_t Ey_\de 
			&\ra \pt_t z \mbox{ weakly in } L^2(Q). 
		\end{split}
	\end{equation}
	
	{\it Step 2}. We prove $z=y$, i.e., $z$ is a weak solution of \eqref{01.10.1} with respect to $(y^0, y^1, f)$. 
	
	Let $\psi\in C^\iy(\ol Q)$ with $\supp\psi(t)\s \Om$ for all $t\in [0,T]$ and $\psi(T)=\pt_t\psi(T)=0$. Set 
	\begin{equation*}
		\wh \de_0=\f{1}{2}\min\left\{\de_0, \min_{t\in [0,T]}\dist(\psi(t), \pt\Om)\}\right\}. 
	\end{equation*}
	Then, for each $\de\in (0,\wh\de_0)$, we have $\psi\in C^\iy(\ol Q_\de)$ with $\supp\psi(t)\s \Om_\de$ for all $t\in [0,T]$ and $\psi(T)=\pt_t\psi(T)=0$, and hence 
	\begin{equation*}
		\begin{split}
			&\int_0^T (y_\de, \pt_{tt}\psi)_{L^2(\Om_\de)}\df t+\int_0^T(y_\de, \psi)_{H_0^1(\Om_\de;w_\de)}\df t\\
			&=\int_0^T(f_\de,\psi)_{L^2(\Om_\de)}\df t-(y_\de^0, \pt_t\psi(0))_{L^2(\Om_\de)}+(y_\de^1,\psi(0))_{L^2(\Om_\de)}
		\end{split}
	\end{equation*}
	by \eqref{03.14.1}. i.e., 
	\begin{equation*}
		\begin{split}
			&\int_0^T (Ey_\de, \pt_{tt}\psi)_{L^2(\Om)}\df t+\int_0^T(Ey_\de, \psi)_{H_0^1(\Om;w)}\df t\\
			&=\int_0^T(f,\psi)_{L^2(\Om)}\df t-(y^0, \pt_t\psi(0))_{L^2(\Om)}+(y^1,\psi(0))_{L^2(\Om)}.
		\end{split}
	\end{equation*}
	Together with this and \eqref{03.14.2}, we get
	\begin{equation*}
		\begin{split}
			&\int_0^T (z, \pt_{tt}\psi)_{L^2(\Om)}\df t+\int_0^T(z, \psi)_{H_0^1(\Om;w)}\df t\\
			&=\int_0^T(f,\psi)_{L^2(\Om)}\df t-(y^0, \pt_t\psi(0))_{L^2(\Om)}+(y^1,\psi(0))_{L^2(\Om)}. 
		\end{split}
	\end{equation*}
	Combining this and Proposition \ref{03.13.P1} and \eqref{03.14.3}, we  show that $z$ is a weak solution of \eqref{01.10.1} with respect to $(y^0,y^1,f)$. 
	
	{\it Step 3}. We prove \eqref{03.13.16}. 
	
	Note that the embeddings  $H_0^1(\Om;w)\hra L^2(\Om)$ and 
	\begin{equation*}
		\left\{\xi\in L^2(0,T; H_0^1(\Om;w))\colon \pt_t \xi \in L^2(Q)\right\}\hra L^2(Q)
	\end{equation*}
	are compact, then, by abstract subsequence, we obtain 
	\begin{equation*}
		Ey_\de\ra y \mbox{ strongly in } L^2(Q). 
	\end{equation*}
	
	{\it Step 4}. We prove \eqref{03.13.17}. 
	Let $y^0,y^1\in C_0^\iy(\Om)$ and $f\in H^1(0,T; L^2(\Om))$. 
	
	Set $\wt \de_0=\f{1}{2}\min\{\f{1}{4}, \dist(\supp y^0,\pt\Om), \dist(\supp y^1,\pt\Om)\}$. Let $\de\in (0,\wt\de_0)$. For all $\ga\in (0,\de)$, from \eqref{03.13.8}  and $y^0\in D(\mcA_\ga), y^1\in H_0^1(\Om_\ga;w_\ga)$ and $f_\ga\in H^1(0,T; L^2(\Om_\ga))$, we obtain 
	\begin{equation}\label{03.14.4}
		\begin{split}
			\left\|y_\ga\right\|_{L^2(0,T; H^2(\Om_\de))}+\|\pt_{tt}y_\ga\|_{L^2(Q_\de)}
			&\leq C_\de\left(\|y^0\|_{H_0^1(\Om;w)}+\|y^1\|_{L^2(\Om)}+\|f\|_{L^2(Q)}\right)
		\end{split}
	\end{equation}
	and 
	\begin{equation}\label{03.14.5}
		\begin{split} 
		&\left\|\pt_t\f{\pt y_\ga}{\pt\nu}\right\|_{L^2(0,T; L^2(\Ga_\ga))}=\left\|\f{\pt (\pt_ty_\ga)}{\pt\nu}\right\|_{L^2(0,T; L^2(\Ga_\ga))}\\
		&\leq C\left(\|y^0\|_{D(\mcA)}+\|y^1\|_{H_0^1(\Om;w)}+\|f\|_{H^1(0,T; L^2(\Om))}\right), 
		\end{split} 
	\end{equation}
	where $C>0$ depends only on $\alpha$ and $T$, and $C_\de>0$ depends only on $\alpha, T$, and $\de$ (see Remark \ref{03.14.R1}). It may happen that $C_\de \to \infty$ as $\de \to 0^+$. By the classical Sobolev embedding theorem and \eqref{03.14.4}, we get
	\begin{equation}\label{03.14.8}
		\left\|\f{\pt y_\ga}{\pt\nu}\right\|_{L^2(0,T; H^\f{1}{2}(\Ga_\de))}\leq C_\de \left(\|y^0\|_{H_0^1(\Om;w)}+\|y^1\|_{L^2(\Om)}+\|f\|_{L^2(Q)}\right). 
	\end{equation}
	Note that the embeddings $H^\f{1}{2}(\Ga_\de)\hra L^2(\Ga_\de)$ and 
	\begin{equation*}
		\left\{\xi\in L^2(0,T; H^\f{1}{2}(\Ga_\de))\colon \pt_t\xi \in L^2(0,T; L^2(\Ga_\de))\right\}\hra L^2(0,T; L^2(\Ga_\de))
	\end{equation*}
	are compact, then, by abstract subsequence, from \eqref{03.14.5} and  \eqref{03.14.8}, there exists $h_\de\in L^2(0,T; L^2(\Ga_\de))$ such that 
	\begin{equation}\label{03.14.6}
		\f{\pt y_\ga}{\pt \nu_A}\bigg|_{\Ga_\de\ts(0,T)}=\f{\pt y_\ga}{\pt \nu}\bigg|_{\Ga_\de\ts(0,T)}\ra h_\de \mbox{ in }L^2(0,T; L^2(\Ga_\de)). 
	\end{equation}
	
	Let $\Psi\in C_0^\iy(\Ga_\de)$. Then there exists   $\wt\Psi\in C^\iy(\overline \Om_\de)$ such that $\wt\Psi =\Psi$ on $\Ga_\de$, $\wt\Psi =0 $ on  $\Ga_2 ^\de$, and $\wt\Psi =0$ on $\Om \setminus \Om_\de$. For each $\ga\in (0,\de)$, multiplying $\wt\Psi$ on the both sides of \eqref{03.13.4}, integrating on $Q_\ga$, integration by parts, we get 
	\begin{equation*}
		\begin{split}
			\iint_{Q_\ga}f_\ga \wt \Psi\df x\df t=\iint_{Q_\ga}(\pt_{tt}y_\ga)\wt\Psi\df x\df t-\iint_{\Ga_\de\ts (0,T)} \left(A_\ga \nabla y_\ga\cdot \nu\right) \Psi\df S\df t+\iint_{Q_\ga}A_\ga \nabla y_\ga \cdot \nabla \wt \Psi\df x\df t. 
		\end{split}
	\end{equation*}
	i.e., 
	\begin{equation*}
		\begin{split}
			\iint_Q f\wt \Psi\df x\df t=\iint_Q (\pt_{tt}Ey_\ga)\wt\Psi\df x\df t-\iint_{\Ga_\de\ts (0,T)}(A\nabla Ey_\ga\cdot \nu)\Psi\df S\df t+\iint_Q A\nabla Ey_\ga \cdot \nabla \wt \Psi\df x\df t. 
		\end{split}
	\end{equation*}
	From \eqref{03.14.2} and  \eqref{03.14.4} and  $z=y$, we get 
	\begin{equation*}
		\pt_{tt}Ey_\de\ra \pt_{tt}y \mbox{ weakly in }L^2(Q), 
	\end{equation*}
	and then 
	\begin{equation}\label{03.14.7}
		\iint_Q f\wt\Psi\df x\df t=\iint_Q (\pt_{tt}y)\wt\Psi\df x\df t-\iint_{\Ga_\de\ts (0,T)}h_\de\Psi\df S\df t+\iint_Q A\nabla y\cdot \nabla \wt\Psi\df x\df t
	\end{equation}
	by \eqref{03.14.2} and letting $\ga\ra 0^+$. Multiplying $\wt\Psi$ on the both sides of \eqref{01.10.1}, integrating on $Q$, integration by parts (note that $\wt\Psi=0$ on $\Om-\Om_\de$), we get
	\begin{equation*}
		\iint_Q f\wt\Psi\df x\df t=\iint_Q (\pt_{tt}y)\wt \Psi\df x\df t-\iint_{\pt Q}(A\nabla y\cdot \nu)\Psi\df S\df t+\iint_Q A\nabla y\cdot \nabla \wt\Psi\df x\df t. 
	\end{equation*}
	Together this with \eqref{03.14.7}, we get
	\begin{equation*}
		\iint_{\Ga_\de\ts (0,T)}h_\de\Psi\df S\df t=\iint_{\pt Q}(A\nabla y\cdot \nu)\Psi\df S\df t. 
	\end{equation*}
	This shows that $h_\de=\f{\pt y}{\pt\nu_A}=\f{\pt y}{\pt\nu}$ on $\Ga_\de\ts (0,T)$ by the arbitrary of $\Psi$. This proves \eqref{03.13.17}
	
	From the fourth term in \eqref{03.13.7}, we get
	\begin{equation*}
		\left\|\f{\pt y_\de}{\pt\nu}\right\|_{L^2(0,T; L^2(\Ga_\ga))}\leq C\left(\|y^0\|_{H_0^1(\Om;w)}+\|y^1\|_{L^2(\Om)}+\|f\|_{L^2(Q)}\right), 
	\end{equation*}
	where the positive constant $C$ depending only on $\al$ and $T$. 
	Combining this and  \eqref{03.14.6} and $h_\de=\f{\pt y}{\pt \nu}$ on $\Ga_\de\ts (0,T)$, we get
	\begin{equation*}
		\left\|\f{\pt y}{\pt \nu}\right\|_{L^2(0,T; L^2(\Ga_\de))}\leq C\left(\|y^0\|_{H_0^1(\Om;w)}+\|y^1\|_{L^2(\Om)}+\|f\|_{L^2(Q)}\right), 
	\end{equation*}
	where the positive constant $C$ depending only on $\al$ and $T$. Letting $\de\ra 0^+$ we obtain \eqref{03.14.9}. 
\end{proof}

\begin{remark}\label{03.15.R1}
	By Theorem \ref{03.13.T2}, the hidden regularity of equation \eqref{01.10.1} can be reduced to that of equation \eqref{03.13.4}. However, this reduction cannot be obtained directly from Theorem \ref{03.11.T1}.
	
	From the proof of Theorem \ref{03.13.T2}, we note that this may  not hold: 
	\begin{equation*}
		\f{\pt Ey_\ga}{\pt \nu}\ra \f{\pt y}{\pt \nu} \mbox{ strongly in } L^2(0,T; L^2(\Ga)) \mbox{ as } \ga \ra 0^+. 
	\end{equation*}
\end{remark}

\begin{proposition}\label{03.16.P1}
	Let $y^0\in H_0^1(\Om;w), y^1\in L^2(\Om)$ and $f\in L^2(Q)$. Let $y$ be the weak solution of \eqref{01.10.1} with respect to $(y^0,y^1,f)$.  Then 
	\begin{equation*}
		\left\|\f{\pt y}{\pt \nu}\right\|_{L^2(0,T; L^2(\Ga))}\leq C\left(\|y^0\|_{H_0^1(\Om;w)}+\|y^1\|_{L^2(\Om)}+\|f\|_{L^2(Q)}\right), 
	\end{equation*}
	where the positive constant $C$ depending only on $\al$ and $T$. 
\end{proposition}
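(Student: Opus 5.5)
The plan is a density argument built on Theorem \ref{03.13.T2}. That theorem gives the estimate \eqref{03.14.9} for smooth initial data. We extend it to general data by approximation and use the continuity of the solution map from Theorem \ref{03.11.T1}.

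First, we fix $(y^0,y^1,f)\in H_0^1(\Om;w)\times L^2(\Om)\times L^2(Q)$ and approximate it.
\begin{itemize}
\item Choose $y^0_k\in C_0^\iy(\Om)$ with $y^0_k\ra y^0$ in $H_0^1(\Om;w)$; this is possible by the definition of $H_0^1(\Om;w)$ as a closure of $C_0^\iy(\Om)$.
\item Choose $y^1_k\in C_0^\iy(\Om)$ with $y^1_k\ra y^1$ in $L^2(\Om)$.
\item Choose $f_k\in H^1(0,T;L^2(\Om))$ (for instance $C_0^\iy(Q)$) with $f_k\ra f$ in $L^2(Q)$.
\end{itemize}
Let $y_k$ be the weak solution with data $(y^0_k,y^1_k,f_k)$. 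By Theorem \ref{03.13.T2}, estimate \eqref{03.14.9} holds for each $y_k$:
\begin{equation*}
\left\|\f{\pt y_k}{\pt \nu}\right\|_{L^2(0,T;L^2(\Ga))}\leq C\left(\|y^0_k\|_{H_0^1(\Om;w)}+\|y^1_k\|_{L^2(\Om)}+\|f_k\|_{L^2(Q)}\right),
\end{equation*}
with $C$ depending only on $\al$ and $T$. By linearity, the same bound applies to the differences $y_k-y_l$, since these solve the problem with data $(y^0_k-y^0_l,y^1_k-y^1_l,f_k-f_l)$.

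Second, the difference bound shows that $\{\pt y_k/\pt\nu\}$ is Cauchy in $L^2(0,T;L^2(\Ga))$. Let $g$ be its limit. By \eqref{03.12.1} in Theorem \ref{03.11.T1}, $y_k\ra y$ in $L^\iy(0,T;H_0^1(\Om;w))$ and $\pt_ty_k\ra\pt_ty$ in $L^\iy(0,T;L^2(\Om))$. Passing to the limit in the bound for $y_k$ gives the desired inequality for $g$.

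The main obstacle is to identify $g$ with $\pt y/\pt\nu$, i.e.\ to show that the conormal trace of $y$ on $\Ga$ is well defined. We handle this through the weak Green formula used in Step 4 of the proof of Theorem \ref{03.13.T2}.
\begin{itemize}
\item For $\Psi\in C_0^\iy(\Ga)$, take an extension $\wt\Psi$ supported away from $\Ga_2^0$ and, for $y_k$, write
\begin{equation*}
\iint_{\Ga\ts(0,T)}\f{\pt y_k}{\pt\nu_A}\Psi\,\df S\df t=\iint_Q\left[(\pt_{tt}y_k)\wt\Psi+A\nabla y_k\cdot\nabla\wt\Psi-f_k\wt\Psi\right]\df x\df t.
\end{equation*}
Time test functions may be added if needed.
\item Rewrite $\iint_Q(\pt_{tt}y_k)\wt\Psi$ as $\lg\pt_{tt}y_k,\wt\Psi\rg$ in the $H^{-1}(\Om;w)$–$H_0^1(\Om;w)$ duality. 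This is legitimate because $\wt\Psi$ can be modified by a function vanishing on $\Ga$ without changing the boundary term.
\item Pass to the limit $k\ra\iy$ in the right-hand side, using the convergences from \eqref{03.12.1}.
\end{itemize}
This shows that $g$ is the weak conormal derivative of $y$ on $\Ga$. Near $\Ga\cap\pt\Om_{\f18}$ it coincides with the trace already controlled in \eqref{03.12.1}. Away from corners, local uniform ellipticity together with interior and boundary $H^2$ regularity (as in Remark \ref{03.14.R1}) identifies it with the classical normal derivative.

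Finally, since $A\nabla y\cdot\nu=\pt y/\pt\nu$ on $\Ga$ (on the lateral sides $\nu=\pm e_1$, and on the top $x_2^\al=1$), the estimate for $g$ is exactly the stated inequality for $\pt y/\pt\nu$, with $C$ depending only on $\al$ and $T$.
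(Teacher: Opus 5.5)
Your argument is sound in its main line. It is a mild reorganization of the paper's density argument rather than a new method.

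\textbf{How your route differs from the paper's.} The paper applies \eqref{03.14.9} to each approximate solution $y_n$ only to obtain a uniform bound. It then uses the $\de$-dependent hidden regularity \eqref{03.12.12}, \eqref{03.16.1}, applied to $y-y_n$, to get $\pt y_n/\pt\nu\ra\pt y/\pt\nu$ in $L^2(0,T;L^2(\Ga_\de))$. It passes to the limit in the bound on $\Ga_\de$ and finally lets $\de\ra0^+$.

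You instead apply \eqref{03.14.9} to the differences $y_k-y_l$. This is legitimate, since their data again lie in $C_0^\iy(\Om)\ts C_0^\iy(\Om)\ts H^1(0,T;L^2(\Om))$. It makes $\{\pt y_k/\pt\nu\}$ Cauchy in $L^2(0,T;L^2(\Ga))$ on the whole nondegenerate boundary.

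\textbf{What each route buys.} Yours gives slightly more: continuity of the data-to-trace map into $L^2(0,T;L^2(\Ga))$, with no final $\de\ra0^+$ step. The price is a separate identification of the limit $g$ with $\pt y/\pt\nu$. The paper gets that identification for free, because on $\Ga_\de$ the trace of a finite-energy solution is, by construction, the limit controlled by \eqref{03.12.12}, \eqref{03.16.1}.

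\textbf{Repairs needed in the identification step.} As written, it has two defects.

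First, the pairing $\lg\pt_{tt}y_k,\wt\Psi\rg_{H^{-1}(\Om;w),H_0^1(\Om;w)}$ is not available.
\begin{itemize}
\item $\wt\Psi=\Psi\neq0$ on $\Ga$, and modifying $\wt\Psi$ by a function vanishing on $\Ga$ does not change this. So $\wt\Psi\notin H_0^1(\Om;w)$.
\item In the limit, $\pt_{tt}y$ is only in $L^2(0,T;H^{-1}(\Om;w))$, so $\iint_Q(\pt_{tt}y)\wt\Psi$ has no meaning.
\item The time test functions are therefore mandatory, not optional. With $\theta\in C_0^\iy(0,T)$, write the second-order term as $\iint_Q y_k\wt\Psi\theta''\,\df x\df t$.
\item After that, every term passes to the limit using only $y_k\ra y$ in $L^2(Q)$ and in $L^2(0,T;H_0^1(\Om;w))$, together with $f_k\ra f$.
\end{itemize}

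Second, you cannot identify $g$ with a classical normal derivative.
\begin{itemize}
\item For data in $H_0^1(\Om;w)\ts L^2(\Om)\ts L^2(Q)$ there is no $H^2$ regularity near $\Ga$. Remark \ref{03.14.R1} requires $y^0\in D(\mcA)$, $y^1\in H_0^1(\Om;w)$ and $f\in H^1(0,T;L^2(\Om))$, so there is no classical derivative to compare with.
\item What $g$ must be compared with is the trace defined by density in Step 7 of Theorem \ref{03.11.T1]}.
\item That comparison is needed on all of $\Ga\cap\{x_2>4\de\}$ for every small $\de$, not only on $\Ga\cap\pt\Om_{\f18}$ as covered by \eqref{03.12.1}.
\item The cleanest fix is exactly the paper's step. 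By \eqref{03.12.12} and \eqref{03.16.1} applied to $y-y_k$, one has $\pt y_k/\pt\nu\ra\pt y/\pt\nu$ in $L^2(0,T;L^2(\Ga\cap\{x_2>4\de\}))$ for each $\de$. Since $\de$ is arbitrary, $g=\pt y/\pt\nu$ a.e.\ on $\Ga\ts(0,T)$, and the weak Green formula becomes unnecessary.
\end{itemize}
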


\begin{proof}
	Choosing $y_n^0\in C_0^\iy(\Om), y_n^1\in C_0^\iy(\Om)$ and $f_n\in H^1(0,T; L^2(\Om))$ such that 
	\begin{equation*}
		\begin{split}
			y_n^0\ra y^0 \mbox{ strongly in }H_0^1(\Om;w), \quad y_n^1\ra y^1\mbox{ strongly in } L^2(\Om),\quad f_n\ra f\mbox{ strongly in } L^2(Q). 
		\end{split}
	\end{equation*}
	Let $y_n
	\ (n\in\N)$ be the weak solution of \eqref{01.10.1} with respect to $(y_n^0,y_n^1,f_n)$, and $y$ be the weak solution of \eqref{01.10.1} with respect to $(y^0,y^1,f)$. 
	From \eqref{03.14.9}, we obtain 
	\begin{equation*}
		\left\|\f{\pt y_n}{\pt\nu}\right\|_{L^2(0,T; L^2(\Ga))}\leq C\left(\|y^0\|_{H_0^1(\Om;w)}+\|y^1\|_{L^2(\Om)}+\|f\|_{L^2(Q)}\right), 
	\end{equation*}
	where the positive constant $C$ depending only on $\al$ and $T$. And from \eqref{03.12.12} and \eqref{03.16.1}, we get
	\begin{equation*}
		\begin{split}
			\left\|\f{\pt (y-y_n)}{\pt\nu}\right\|_{L^2(0,T; L^2(\Ga_\de))}\leq C\left(\|y^0-y_n^0\|_{H_0^1(\Om;w)}^2+\|y^1-y_n^1\|_{L^2(\Om)}+\|f_n-f\|_{L^2(Q)}\right), 
		\end{split}
	\end{equation*}
	where the positive constants $C$ depending only on $\al, T$ and $\de$. Hence 
	\begin{equation*}
		\left\|\f{\pt y}{\pt\nu}\right\|_{L^2(0,T; L^2(\Ga_\de))}\leq C\left(\|y^0\|_{H_0^1(\Om;w)}+\|y^1\|_{L^2(\Om)}+\|f\|_{L^2(Q)}\right)
	\end{equation*}
	for all $\de\in (0,\f{1}{8})$, 
	where $C>0$ depends only on $\alpha$ and $T$ and is independent of $n$. Letting $\de\ra 0^+$, we get the proposition. 
\end{proof}

\end{document}